\documentclass[11pt]{amsart}

\usepackage{a4wide,amsmath,amssymb,dsfont,epsfig,graphicx,subfig,verbatim,multicol,enumitem,lipsum}
\usepackage[bottom]{footmisc}

\newtheorem{theorem}{Theorem}
\newtheorem{proposition}[theorem]{Proposition}

\newtheorem{corollary}[theorem]{Corollary}

\newtheorem*{algorithma}{Algorithm A}
\newtheorem*{algorithmb}{Algorithm B}

\newtheorem*{theoremX}{Theorem}

\newtheorem*{stc}{Strong Terminating Conjecture \cite{SchultzShiflett}}

\newtheorem*{rl}{Reproduction Lemma}

\renewenvironment{proof}[1][Proof:]{\noindent\textbf{#1\\} }{\ \rule{0.5em}{0.5em}}
\newenvironment{proofoftheoremI}[1][Proof of part i):]{\noindent\textbf{#1\\} }{\ \rule{0.5em}{0.5em}}
\newenvironment{proofoftheoremII}[1][Proof of part ii):]{\noindent\textbf{#1\\} }{\ \rule{0.5em}{0.5em}}

 % smaller gap than \quad
 % notes on the margin
%   modulus of a congruence
%\def\proof{\noindent{\sc Proof.}\hskip 5pt}
%\def\endproof{\hfill\vbox{\hrule
%    \hbox{\vrule\kern4pt\vbox{\kern4pt
%    \kern4pt}\kern4pt\vrule}\hrule}\bigskip}%   end of proof box

\newcommand{\cM}{\mathcal{M}}

\newcommand{\mmm}{{\sc mmm}}
\newcommand{\AP}{\mathrm{AP}}
\newcommand{\PP}{\mathrm{P}}

%stuff for figures--------------------------------------------------
\usepackage{pgf,tikz}
\usepackage{tkz-fct}
\usepackage{pgfplots}
\usetikzlibrary{arrows,trees}
\pgfplotsset{ every non boxed x axis/.append style={x axis line style=<->},
     every non boxed y axis/.append style={y axis line style=<->}, every axis/.append style={font=\tiny}}
%colors from slides-------------------------------------------------
\definecolor{newpurple}{RGB}{195, 22, 140}
\definecolor{newgray}{RGB}{240, 240, 240}
\definecolor{newlightblue}{RGB}{0, 175, 158}
\definecolor{newblue}{RGB}{47, 50, 145}
\definecolor{newyellow}{RGB}{232, 222, 0}
\definecolor{newgreen}{RGB}{0, 155, 1}
%correction---------------------------------------------------------

%-------------------------------------------------------------------

\raggedbottom

\begin{document}
\title[]
{On the unboundedness of the transit time of mean-median orbits}
\author{Jonathan Hoseana and Franco Vivaldi}
\address{School of Mathematical Sciences, Queen Mary,
University of London,
London E1 4NS, UK}
%\email{f.vivaldi@maths.qmul.ac.uk}
%\urladdr{http://www.maths.qmul.ac.uk/\~{}fv}

\begin{abstract}
The transit time of mean-median orbits ---the time it takes for an 
orbit to become stationary--- has been conjectured to be finite
\cite{SchultzShiflett} but unbounded \cite{HoseanaMSc} 
over the rationals.
Through a study of some near-regular structures in these orbits,
we construct two non-trivial sequences of initial 
sets of increasing size for which the transit time grows linearly 
and quadratically, respectively, with the size of the set.
\end{abstract}
\date{\today}

\maketitle

%-------------------------------------------------------------------
\section{Introduction}\label{section:Introduction}

The \textit{mean-median map} (\mmm) is a dynamical system over the space of finite multisets\footnote{Hereafter referred to simply as sets.} of real numbers. 
This map enlarges a set by adjoining to it a new real number so that the arithmetic mean of the enlarged set equals the median of the original set. 
The iteration of this deceptively simple map produces a novel and intriguing 
dynamics, where the orbits are shaped by the interaction with previous iterates.
This dynamics is poorly understood, in spite of considerable research \cite{SchultzShiflett,ChamberlandMartelli,HoseanaMSc,CellarosiMunday,HoseanaVivaldi}. Several conjectures ---all still open--- have been formulated, the most prominent being:\bigskip

\begin{stc}
For every initial set, the sequence of new numbers generated by iterating the \mmm\ is eventually constant.
\end{stc}\bigskip

We believe that this conjecture holds over the rationals, although not necessarily over larger fields \cite{HoseanaMSc}. Accordingly, we associate to any initial set $\xi$ the time step $\tau(\xi)\in\mathbb{N}_{>|\xi|}\cup\{\infty\}$ at which its \mmm\ sequence becomes constant, and the limit $m(\xi)\in\mathbb{R}$ of this sequence, if it exists. These numbers are called the \textit{transit time} and the \textit{limit} of $\xi$, respectively.

%%%%%%%%%%%%%%%%%%%%%%%%%%%%%%%%%%%%%%%%%%%%%%%%%%%%%%%% FIGURE
\begin{figure}[t!]
\centering
\input{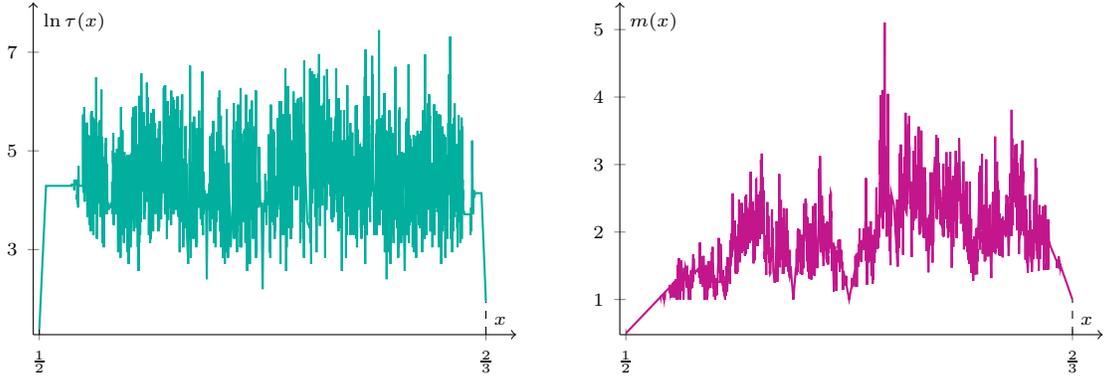}
\caption{\label{fig:mtau}\small
The transit time (left, in log-scale) and limit (right) of the initial set $[0,x,1]$ as functions of $x\in\left[\frac{1}{2},\frac{2}{3}\right]$ sampled over fractions with denominator up to $2000$.
}
\end{figure}
%%%%%%%%%%%%%%%%%%%%%%%%%%%%%%%%%%%%%%%%%%%%%%%%%%%%%%%%

In the simplest non-trivial case ---a three-element initial set--- it can be shown that the dynamics is conjugate to that of the initial set $[0,x,1]$, $x\in\left[\frac{1}{2},\frac{2}{3}\right]$, so that the transit time and limit are functions of $x$ (figure \ref{fig:mtau}). In this case, computational evidence [figure \ref{fig:tauoriginal} (a)] suggests that, once the very large fluctuations are averaged out, the transit time for $x=\frac{p}{q}$ depends algebraically on the denominator $q$ with exponent $\alpha\approx 0.42$ \cite{HoseanaMSc,HoseanaVivaldi}. 
Furthermore, the maximum transit time of all fractions with denominator at 
most $q$ is observed to depend algebraically on $q$ with the larger
exponent $\beta\approx 1.45$ [figure \ref{fig:tauoriginal} (b)]. 
In particular, these observations suggest that the transit time function 
over the space of three-element initial sets is unbounded.

%%%%%%%%%%%%%%%%%%%%%%%%%%%%%%%%%%%%%%%%%%%%%%%%%%%%%%%% FIGURE
\begin{figure}[b!]
\centering
\input{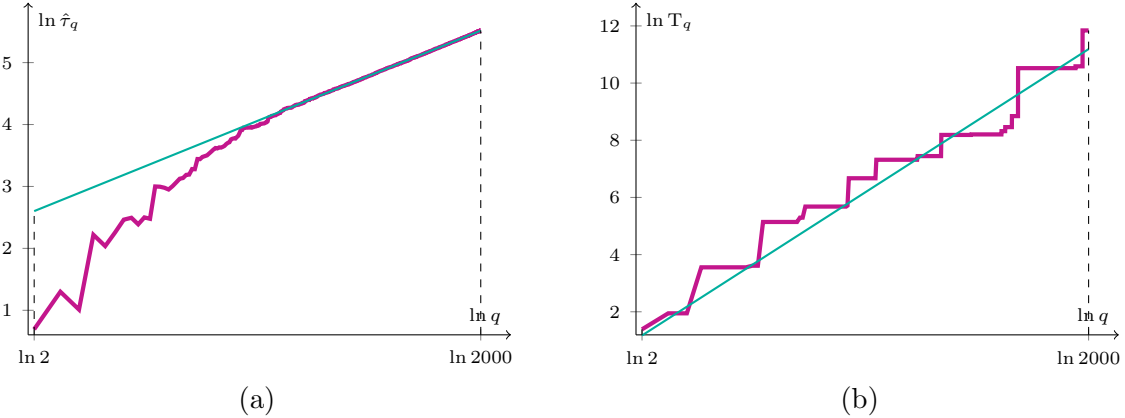}
\caption{\label{fig:tauoriginal}\small
(a) Logarithm of the $q$-th Cesaro mean $\hat\tau_q$ of the sequence $\left(\tau_q\right)_{q=2}^{2000}$ of average transit times $\tau_q$ of fractions with denominator $q$ in the interval $\left[\frac{1}{2},\frac{2}{3}\right]$, versus $\ln q$. The line has equation $\ln\hat\tau_q=\alpha \ln q + 2.31$, where $\alpha\approx 0.42$. (b) Logarithm of the maximum transit time $\text{T}_q$ of fractions with denominator up to $q$ in the interval $\left[\frac{1}{2},\frac{2}{3}\right]$, versus $\ln q$, for $q\in\{2,\ldots,2000\}$. The line has equation $\ln\text{T}_q=\beta \ln q + 0.17$, where $\beta\approx 1.45$.
}
\end{figure}
%%%%%%%%%%%%%%%%%%%%%%%%%%%%%%%%%%%%%%%%%%%%%%%%%%%%%%%%

We wish to study the unboundedness of transit time of \mmm\ orbits in a more controlled setting. 
For this purpose, we consider specific families of larger initial sets whose dynamics is more predictable. 
Trivially, any initial set $\xi$ whose mean and median coincide has 
$\tau(\xi)=|\xi|+1$, the orbit becoming constant after just one 
iteration. Furthermore, as we shall see shortly, for every even 
$k\in\mathbb{N}$ there exists a sequence of sets $\xi$ with
$|\xi|\to\infty$ and $\tau(\xi)=|\xi|+k$. 

In this paper, we shall improve upon such straightforward 
$\tau(\xi)\sim|\xi|$ asymptotics by establishing the following 
result:\bigskip

\begin{samepage}
\begin{theoremX}\textcolor{white}{a}
\begin{enumerate}
\item[i)] For every $k\in\mathbb{N}_0$ there exists a sequence of 
sets $\xi$ of increasing size for which 
$$\tau(\xi)\sim \sqrt{\frac{3k+5}{k+1}}\,|\xi|.$$
\item[ii)] There exists a sequence of sets $\xi$ of increasing size 
for which 
$$\tau(\xi)\sim \frac{|\xi|^2}{2}.$$
\end{enumerate}
\end{theoremX}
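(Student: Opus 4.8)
\textbf{General strategy.} Both statements will be established by constructing explicit sequences of initial sets whose \mmm-orbits remain, for all time, inside a family of \emph{near-regular configurations}: finite multisets consisting of a long central arithmetic progression, together with a bounded ``defect'' near one end and a single mobile point at which the next iterate is placed. The plan is first to isolate such a family (generalising the ``$\tau(\xi)=|\xi|+k$'' examples foreshadowed above, which are its trivial members) and to prove a \emph{Reproduction Lemma}: from a near-regular configuration with integer parameter vector $\mathbf{p}$, exactly $T(\mathbf{p})$ iterations of the \mmm\ produce a near-regular configuration with parameter vector $\Phi(\mathbf{p})$, where $T$ is an explicit linear functional and $\Phi$ a piecewise-affine self-map of the (finite-dimensional) parameter space. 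Concretely I expect two update rules --- call them Algorithm A and Algorithm B --- selected according to whether the mobile point gets absorbed into the central progression or displaces it, with the selection and the formulas for $T,\Phi$ following from a short analysis of linear inequalities among the parameters. Granting the lemma, the computation of $\tau(\xi)$ reduces to following the affine orbit $\mathbf{p},\Phi(\mathbf{p}),\Phi^2(\mathbf{p}),\dots$ until it enters the region where the \mmm\ sequence has already stabilised, and summing the corresponding values of $T$.

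\textbf{Part i).} For fixed $k\in\mathbb{N}_0$ and each large $n$ I would take $\xi=\xi_{k,n}$ to be an arithmetic progression of $n$ terms with a fixed gadget of $O(1)$ extra points adjoined --- essentially a short progression of a different common difference whose ``slope mismatch'' is tuned by $k$ --- so that $|\xi_{k,n}|=n+O(1)$. The Reproduction Lemma then applies with initial parameter vector proportional to $(n,a_k)$ for an explicit $a_k=a_k(k)$, and the induced recursion $\Phi$ is of subtractive (Euclidean) type; it runs for $\Theta(n)$ macro-steps, and since $\Phi$ is eventually self-similar on rays its accumulated cost is $\tau(\xi_{k,n})=(1+c_k)\,n+O(1)$, where the ratio $1+c_k$ --- being a rational function of an eigenvalue of a composed integer affine map --- satisfies a quadratic equation whose coefficients depend on $k$. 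The final, purely computational, step is to check that the relevant root equals $\sqrt{(3k+5)/(k+1)}$; since $|\xi_{k,n}|=n+O(1)$ this yields $\tau(\xi_{k,n})\sim\sqrt{(3k+5)/(k+1)}\,|\xi_{k,n}|$. In particular $k=0$ improves the slope-one behaviour $\tau\sim|\xi|$ recorded above to $\tau\sim\sqrt5\,|\xi|$.

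\textbf{Part ii).} To reach quadratic growth I would instead let the gadget grow with the primary length: take $\xi_n$ to be a near-regular configuration of total size $\Theta(n)$ whose defect encodes a Euclidean run with $\Theta(n)$ macro-steps, the $j$-th of which costs $\Theta(j)$ iterations --- for instance the initial vector proportional to $(1,n)$ under the recursion $(1,j)\mapsto(1,j-1)$ with $T$ linear in $j$. Then $\tau(\xi_n)=\sum_j T_j+O(n)$, which is $\sim n^2/2$ once the leading coefficients are matched, while $|\xi_n|=n+O(1)$; the points created along the way accumulate into a ``trail'' that eventually dominates the median, which is precisely when the \mmm\ sequence becomes constant. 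Matching the leading $\tfrac12$ coming from $\sum_{j\le n}j$ with the coefficient $1$ of $n^2$ in $|\xi_n|^2$ gives $\tau(\xi_n)\sim|\xi_n|^2/2$.

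\textbf{Main obstacle.} The delicate point is not the arithmetic but the invariance of the near-regular form: I must show that the defect stays bounded (part i)) or grows in exactly the prescribed way (part ii)) rather than proliferating, and that at every single iteration the correct update rule is triggered --- which rests on a family of inequalities among the parameters (governing which element realises the median, and whether the new point lands to the left of, inside, or to the right of the central progression) that have to be shown to persist along the entire orbit. Treating ties in the median --- the genuine multiset subtlety of the \mmm\ --- and nailing down the last few iterations before stabilisation, so that the $O(1)$ (resp.\ $O(n)$) error terms are honest, is where I expect most of the effort to lie.
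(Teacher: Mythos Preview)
Your high-level strategy --- isolate a family of structured configurations and track the orbit via a recursion on parameters --- is indeed what the paper does, but your concrete mechanisms in both parts are wrong, and the gap is not just missing detail.

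For Part ii), you propose $\Theta(n)$ macro-steps with the $j$-th costing $\Theta(j)$, summing to $\sim n^2/2$. The paper's mechanism is completely different: a ready arithmetic progression of size $s$ reproduces into one of size $2s-2$, so the sizes grow \emph{exponentially}, $|\AP_i|=2^i+2$. There are only $N=\Theta(\log n_0)$ macro-steps, the $i$-th costing $\Theta(2^i)$ iterations, and the total $\sim 2^{N+1}$ matches $n_0^2/2$ because $n_0$ is chosen of order $2^{N/2}$ (the minimal size for which no intermediate iterate becomes an obstacle). Your linear-growth picture would require the reproducing subset to gain one element per round, which is not what the Reproduction Lemma gives; it is unclear how you would engineer that, and you give no construction.

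For Part i), the paper does \emph{not} use an $O(1)$ gadget. The initial set contains $k(N-1)$ obstacles --- one batch of $k$ equally-spaced points in each of the $N-1$ gaps between consecutive reproducing pairs --- so the gadget is a constant \emph{fraction} of $|\xi|$, not bounded. Each reproduction then takes exactly $2k+4$ iterations, giving $\tau=n_0+(2k+4)(N-1)+O(1)$; the quadratic irrational $\sqrt{(3k+5)/(k+1)}$ arises not from an eigenvalue of an affine map but from solving the quadratic inequality that determines the minimal admissible $n_0$ as a function of $N$ (the condition that the first intermediate iterate clears $\max(\PP_{N-1})$). Your ``subtractive Euclidean'' picture with a bounded defect does not match this, and again you give no construction that would produce the specific constant.

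In short, the paper's proofs are explicit constructions with exact formulas for every iterate, verified via a concrete reproduction lemma for ready arithmetic progressions; your proposal substitutes an abstract piecewise-affine framework whose claimed dynamics (linear step-cost in ii), bounded defect in i)) do not correspond to what actually happens, and the crucial constants would not fall out of it.
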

\end{samepage}\bigskip

\noindent 
The proof will require the development of a theory of certain 
near-regular structures that are found in some \mmm\ sequences 
(figure \ref{fig:205}).

To relate this result to the conjectured behaviour 
of the system $[0,x,1]$ 
we shall introduce a suitable measure of complexity of rational sets. 
We will then show that there is a subsequence of ii) above whose
transit time grows algebraically with the complexity,
with exponent lying between
the empirical exponents $\alpha$ and $\beta$ of
figure \ref{fig:tauoriginal}.

%%%%%%%%%%%%%%%%%%%%%%%%%%%%%%%%%%%%%%%%%%%%%%%%%%%%%%%% FIGURE
\begin{figure}
\centering
\input{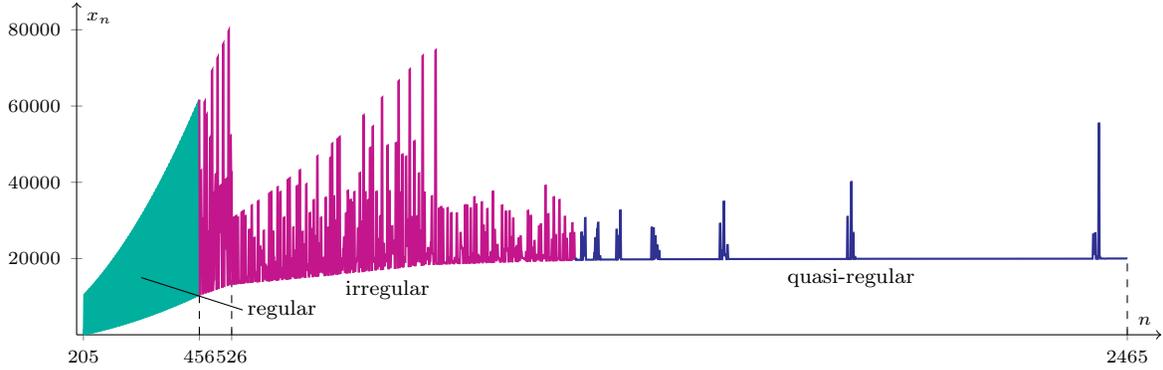}
\caption{\label{fig:205}\small The normal form orbit of order $205$ which features a regular, irregular, and quasi-regular structure.}
\end{figure}
%%%%%%%%%%%%%%%%%%%%%%%%%%%%%%%%%%%%%%%%%%%%%%%%%%%%%%%%

We now define the \mmm\ precisely, and survey previous works.
Let $\xi_{n_0}=\left[x_1,\ldots,x_{n_0}\right]$, $n_0\in\mathbb{N}$, be an initial set. For every $n\geqslant n_0$, the image $\xi_{n+1}$ of the set $\xi_n$ is obtained by adjoining the number $x_{n+1}$ for which
\begin{equation}\label{eq:defofMMM}
\left\langle\xi_{n+1}\right\rangle=\mathcal{M}\left(\xi_n\right),\qquad\text{i.e.},\qquad x_{n+1}=(n+1)\mathcal{M}\left(\xi_n\right)-\mathcal{S}\left(\xi_n\right),
\end{equation}
where $\left\langle \xi_n\right\rangle$, $\mathcal{M}\left(\xi_n\right)$, and $\mathcal{S}\left(\xi_n\right)$ denote the mean, median\footnote{The central element of the ordered list $\xi_n$ if $n$ is odd, otherwise the mean of the two central elements.}, and sum of elements of $\xi_n$, respectively. Formally, we write
$$\xi_{n+1}:=\xi_n\uplus\left[x_{n+1}\right],$$
where the union operator $\uplus$ increases the multiplicity of the number $x_{n+1}$ in $\xi_n$ by $1$ to obtain $\xi_{n+1}$. The iterations thus produce a sequence of sets $\left(\xi_n\right)_{n=n_0}^\infty$, an \textit{orbit} $\left(x_n\right)_{n=1}^\infty$, and a \textit{median sequence} $\left(\cM_n\right)_{n=n_0}^\infty$, where $\cM_n:=\cM\left(\xi_n\right)$. If the sequence is eventually constant, it is said to \textit{stabilise}.

The recursion \eqref{eq:defofMMM} can be rewritten as a second-order recursion involving only the medians, namely
\begin{equation}\label{eq:MMMmedian}
x_{n+1}=(n+1)\mathcal{M}_n-n\mathcal{M}_{n-1}=n\Delta\mathcal{M}_n+\mathcal{M}_n,\quad\text{where}\quad \Delta\mathcal{M}_n:=\mathcal{M}_n-\mathcal{M}_{n-1},
\end{equation}
which is valid for every $n\geqslant n_0+1$.
From this it follows that two equal consecutive medians imply stabilisation 
\cite[page 197]{SchultzShiflett} and that the median sequence is monotonic \cite[theorem 2.1]{ChamberlandMartelli}. Due to symmetry ---replacing $\xi_{n_0}$ by $-\xi_{n_0}$ reverses the monotonicity--- we only need to consider non-decreasing median sequences.

From equation \eqref{eq:MMMmedian} we see that every iterate beyond 
the second one depends only on two central elements of the evolving set. 
In addition, if the initial set has zero median, then
equation \eqref{eq:defofMMM} implies that the first iterate depends 
only on the sum of its elements. Exploiting these facts, it is possible to construct an odd-sized initial set of the form\footnote{The elements of $\xi_{n_0}$ are written in non-decreasing order.}
\begin{equation}\label{eq:NFset}
\xi_{n_0}=\left[\ell_1,\ldots,\ell_{\frac{n_0-1}{2}},0,1,u_1,\ldots,u_{\frac{n_0-3}{2}}\right],
\end{equation}
where the $\ell_i$s and $u_i$s are chosen to be large enough in magnitude while keeping their sum fixed, so that these elements do not participate in the dynamics of $\xi_{n_0}$ for any time interval of interest. In particular, if the $\ell_i$s, the $u_i$s, and $x_{n_0+1}$ are infinitely far, then the sequence $\left(x_n\right)_{n=t}^{\infty}$, where $t:=n_0+2$, is the so-called \textit{normal form orbit of order $t$} \cite[section 6]{HoseanaVivaldi} which is conjugate to the orbit near any stabilised local minimum of the limit function having transit time $t$. Such minima have an intricate structure, see figure \ref{fig:mtau} (right).

Due to the absence of non-central elements in the initial set, the normal form orbit of order $t$ features an initial time interval of length $N_t\sim t\sqrt{5}$ in which the orbit is predictable and explicitly computable;
this is the so-called \textit{regular phase} 
(figure \ref{fig:205}) \cite[lemma 13]{HoseanaVivaldi}. If one of these non-central elements is repositioned precisely at an 
odd-indexed median in the regular phase, then the whole orbit 
becomes predictable; it consists of a regular structure which is truncated 
due to stabilisation when this median is reached. For any given even $k\in\mathbb{N}$ we can use this idea ---choosing sufficiently long regular phases--- to construct initial sets $\xi$ of arbitrarily large size for which 
$\tau(\xi)=|\xi|+k$, as suggested earlier.

%%%%%%%%%%%%%%%%%%%%%%%%%%%%%%%%%%%%%%%%%%%%%%%%%%%%%%%% FIGURE
\begin{figure}
\centering
\begin{tikzpicture}
\begin{axis}[
	xmin=10.82432432,
	xmax=24.87837838,
	ymin=0,
	ymax=121.9090909,
    ytick={5.5000, 6.5000, 13, 14, 30.250, 34.750, 61.750, 68.250},
    yticklabels={},
    xtick={11,15,19,23},
    xticklabels={$t$,$t+4$,$t+8$,$t+12$},
	axis lines=middle,
	samples=100,
	xlabel=$n$,
	ylabel={$x_n$,\,$\mathcal{M}_n$},
	width=8cm,
	height=6cm,
	clip=false,
	axis lines=middle,
    x axis line style=->,
    y axis line style=->,
]
\draw[thick,color=newpurple] plot coordinates {(axis cs:11., 5.5000) (axis cs:12., 6.5000) (axis cs:13., 30.250) (axis cs:14., 34.750) (axis cs:15., 13.) (axis cs:16., 14.) (axis cs:17., 61.750) (axis cs:18., 68.250) (axis cs:19., 22.500) (axis cs:20., 23.500) (axis cs:21., 103.25) (axis cs:22., 111.75) (axis cs:23., 34.)};
\draw[thick,color=newpurple,dashed] (axis cs:23., 34.)--(axis cs:24., 35.);

\draw[thick,color=newblue] plot coordinates {(axis cs:11., 1.) (axis cs:12., 3.2500) (axis cs:13., 5.5000) (axis cs:14., 6.) (axis cs:15., 6.5000) (axis cs:16., 9.7500) (axis cs:17., 13.) (axis cs:18., 13.500) (axis cs:19., 14.) (axis cs:20., 18.250) (axis cs:21., 22.500) (axis cs:22., 23.) (axis cs:23., 23.500)};
\draw[thick,color=newblue,dashed] (axis cs:23., 23.500)--(axis cs:24., 26.875);

\draw[dashed] (axis cs:10.82432432, 5.5000) -- (axis cs:13, 5.5000);
\draw[dashed] (axis cs:10.82432432, 6.5000) -- (axis cs:15, 6.5000);
\draw[dashed] (axis cs:10.82432432, 13) -- (axis cs:17, 13);
\draw[dashed] (axis cs:10.82432432, 14) -- (axis cs:19, 14);

\draw[dashed] (axis cs:10.82432432, 30.250) -- (axis cs:13, 30.250);
\draw[dashed] (axis cs:10.82432432, 34.750) -- (axis cs:14, 34.750);
\draw[dashed] (axis cs:10.82432432, 61.750) -- (axis cs:17, 61.750);
\draw[dashed] (axis cs:10.82432432, 68.250) -- (axis cs:18, 68.250);

\draw[dashed] (axis cs:11., 0) -- (axis cs:11., 5.5000);
\draw[dashed] (axis cs:15., 0) -- (axis cs:15., 13);
\draw[dashed] (axis cs:19., 0) -- (axis cs:19., 22.5000);
\draw[dashed] (axis cs:23., 0) -- (axis cs:23., 34);

\draw[dashed] (axis cs:19., 22.500) -- (axis cs:21., 22.500);
\draw[dashed] (axis cs:20., 23.500) -- (axis cs:23., 23.500);

\fill[newpurple] (axis cs:11., 5.5000) circle (1.5pt);
\fill[newpurple] (axis cs:12., 6.5000) circle (1.5pt);
\fill[newpurple] (axis cs:13., 30.250) circle (1.5pt);
\fill[newpurple] (axis cs:14., 34.750) circle (1.5pt);
\fill[newpurple] (axis cs:15., 13.) circle (1.5pt);
\fill[newpurple] (axis cs:16., 14.) circle (1.5pt);
\fill[newpurple] (axis cs:17., 61.750) circle (1.5pt);
\fill[newpurple] (axis cs:18., 68.250) circle (1.5pt);
\fill[newpurple] (axis cs:19., 22.500) circle (1.5pt);
\fill[newpurple] (axis cs:20., 23.500) circle (1.5pt);
\fill[newpurple] (axis cs:21., 103.25) circle (1.5pt);
\fill[newpurple] (axis cs:22., 111.75) circle (1.5pt);
\fill[newpurple] (axis cs:23., 34.) circle (1.5pt);

\fill[newblue] (axis cs:11., 1.) circle (1.5pt);
\fill[newblue] (axis cs:12., 3.2500) circle (1.5pt);
\fill[newblue] (axis cs:13., 5.5000) circle (1.5pt);
\fill[newblue] (axis cs:14., 6.) circle (1.5pt);
\fill[newblue] (axis cs:15., 6.5000) circle (1.5pt);
\fill[newblue] (axis cs:16., 9.7500) circle (1.5pt);
\fill[newblue] (axis cs:17., 13.) circle (1.5pt);
\fill[newblue] (axis cs:18., 13.500) circle (1.5pt);
\fill[newblue] (axis cs:19., 14.) circle (1.5pt);
\fill[newblue] (axis cs:20., 18.250) circle (1.5pt);
\fill[newblue] (axis cs:21., 22.500) circle (1.5pt);
\fill[newblue] (axis cs:22., 23.) circle (1.5pt);
\fill[newblue] (axis cs:23., 23.500) circle (1.5pt);

\node[below right,yshift=2pt] at (axis cs:23,34) {\tiny linear iterates};
\node[above] at (axis cs:22., 111.75) {\tiny quadratic iterates};

\node[left] at (axis cs:9,5) {\tiny $x_t=\frac{t}{2}$};
\node[left] at (axis cs:9,17.5) {\tiny $x_{t+1}=\frac{t}{2}+1$};
\node[left] at (axis cs:9,30) {\tiny $x_{t+4}=t+2$};
\node[left] at (axis cs:9,42.5) {\tiny $x_{t+5}=t+3$};

\node[left] at (axis cs:9,82.5) {\tiny $x_{t+2}=\frac{t^2}{4}$};
\node[left] at (axis cs:9,95) {\tiny $x_{t+3}=\frac{t^2}{4}+\frac{t}{2}-1$};
\node[left] at (axis cs:9,107.5) {\tiny $x_{t+6}=\frac{t^2}{4}+\frac{5}{2}t+4$};
\node[left] at (axis cs:9,120) {\tiny $x_{t+7}=\frac{t^2}{4}+3t+5$};

\draw (axis cs:9,5) -- (axis cs:9.5,5) -- (axis cs:10.82432432,5.5000);
\draw (axis cs:9,17.5) -- (axis cs:9.5,17.5) -- (axis cs:10.82432432,6.5000);
\draw (axis cs:9,30) -- (axis cs:9.5,30) -- (axis cs:10.82432432,13);
\draw (axis cs:9,42.5) -- (axis cs:9.5,42.5) -- (axis cs:10.82432432,14);

\draw (axis cs:9,82.5) -- (axis cs:9.5,82.5) -- (axis cs:10.82432432,30.250);
\draw (axis cs:9,95) -- (axis cs:9.5,95) -- (axis cs:10.82432432,34.750);
\draw (axis cs:9,107.5) -- (axis cs:9.5,107.5) -- (axis cs:10.82432432,61.750);
\draw (axis cs:9,120) -- (axis cs:9.5,120) -- (axis cs:10.82432432,68.250);
\end{axis}
\end{tikzpicture}
\caption{\label{fig:regphase}\small A normal form orbit (purple) and the associated median sequence (blue) during the regular phase.}
\end{figure}
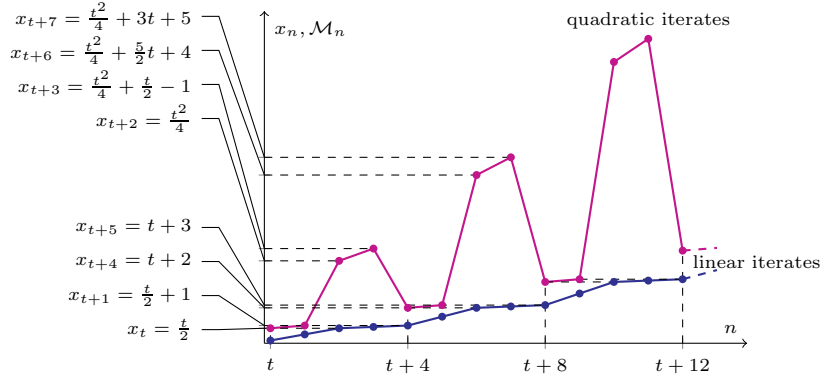
%%%%%%%%%%%%%%%%%%%%%%%%%%%%%%%%%%%%%%%%%%%%%%%%%%%%%%%%

During the regular phase, pairs of iterates which are linear and quadratic polynomials in $t$ are generated in alternation (figure \ref{fig:regphase}). 
These linear and quadratic iterates form two intertwined increasing sequences, 
each quadratic term lying above all linear ones:
$$\underbrace{0<1<x_t<x_{t+1}<x_{t+4}<x_{t+5}<\cdots}_{\text{linear iterates}}<\underbrace{x_{t+2}<x_{t+3}<x_{t+6}<x_{t+7}<\cdots}_{\text{quadratic iterates}}.$$
The non-decreasing median sequence 
$$\left(\mathcal{M}_{n}\right)_{n=t-2}^\infty=\left(0, \left\langle 0,1\right\rangle, 1, \left\langle 1,x_t\right\rangle, x_t, \left\langle x_t,x_{t+1}\right\rangle, x_{t+1}, \left\langle x_{t+1},x_{t+4}\right\rangle, x_{t+4}, \ldots\right),$$
walks firstly across the linear iterates until it eventually reaches the first quadratic iterate $x_{t+2}$, i.e., $\mathcal{M}_{N_t+2}=\left\langle x_{N_t-2}, x_{t+2}\right\rangle$, resulting in loss of regularity and concluding the regular phase. The explicit description is therefore known only up to time $N_t+2$. This number is a lower bound for the transit time of the orbit (figure \ref{fig:taunormal}) since stabilisation does not occur during the regular phase.

%%%%%%%%%%%%%%%%%%%%%%%%%%%%%%%%%%%%%%%%%%%%%%%%%%%%%%%% FIGURE
\begin{figure}
\centering
\input{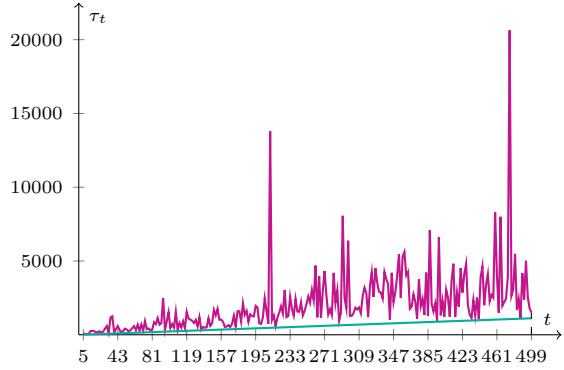}
\caption{\label{fig:taunormal}\small The transit time $\tau_t$ of the normal form orbit of order $t$ with its known lower bound $N_t+2$.}
\end{figure}
%%%%%%%%%%%%%%%%%%%%%%%%%%%%%%%%%%%%%%%%%%%%%%%%%%%%%%%%

A key observation ---which motivates our theory--- is that each pair of consecutive linear iterates $[0,1]$, $\left[x_t,x_{t+1}\right]$, $\left[x_{t+4},x_{t+5}\right]$, \ldots, $\left[x_{N_t+1},x_{N_t+2}\right]$ is generated when the median walks across the preceding pair\footnote{This means that the three consecutive medians involved in the computation of a pair are the smaller element, the mean, and the larger element of the preceding pair. For example, $\left[x_t,x_{t+1}\right]=\left[t\mathcal{M}_{t-1}-(t-1)\mathcal{M}_{t-2},(t+1)\mathcal{M}_{t}-t\mathcal{M}_{t-1}\right]$, where $\mathcal{M}_{t-2}=0$, $\mathcal{M}_{t-1}=\langle 0,1\rangle$, and $\mathcal{M}_t=1$.}, resulting in the following preservation of differences:
\begin{equation}\label{eq:gaptransfer}
1=x_{t+1}-x_t=x_{t+5}-x_{t+4}=\cdots=x_{N_t+2}-x_{N_t+1}.
\end{equation}
In this sense, the dynamics taking place in the regular phase can be regarded as \textit{recursive reproductions} of pairs of linear iterates.

The pairs of consecutive quadratic iterates, on the other hand, are generated when the median walks across the gaps between two consecutive pairs of linear iterates (figure \ref{fig:regphase}). For example, the quadratic pair $\left[x_{t+2},x_{t+3}\right]$ is generated when the median walks across the gap between $1$ and $x_t$. The fact that this gap is empty ---contains no other iterate--- implies that the median requires only two iterations to walk across it, thereby generating only two iterates which do not participate in the regular phase dynamics: $x_{t+2}$ and $x_{t+3}$. %non-participating 

In this paper we deal with phases which are not regular. 
Figure \ref{fig:205} suggests that there are two qualitatively different such phases: one which can be regarded as a perturbed version of the regular phase, and one which consists of straight segments of increasing length separated by conspicuous spikes. We refer to the former as an \textit{irregular phase} and the latter as a \textit{quasi-regular phase}.

In an irregular phase, the dynamics consists of the aforementioned recursive reproductions of pairs, but the gaps between consecutive pairs are not necessarily empty; they may contain previously-generated iterates (which we call \textit{obstacles}) which create irregularities in the orbit as the median walks across them. Later on we will construct a family of initial sets whose orbits contain only an irregular phase with an arbitrary number of obstacles in each gap. The transit time of such a family grows linearly as in part i) of our main theorem, where the coefficient depends on the number of obstacles.

In a quasi-regular phase, the dynamics is similar, but the reproducing subsets have larger cardinalities. Notice that, when a three-element subset, say, reproduces, the median walks across its elements requiring four iterations. Similarly, a four-element subset requires six iterations, a six-element subset requires ten iterations, and so on, producing the growth of the length of the straight segments in a quasi-regular phase visible in figure \ref{fig:205}. As a consequence, a family of initial sets whose orbits contain only a quasi-regular phase exhibits a quadratic growth of transit time as in part ii) of the theorem.\bigskip

This paper is organised as follows. In section \ref{section:ReproductionsOfSubsets} we introduce the notion of \textit{ready} subsets (abbreviated as \textit{R-subsets}) and state the \textit{reproduction lemma} which explains how an R-subset reproduces. Then we specialise this lemma to the cases in which the reproducing subset is an arithmetic progression and a pair (corollary \ref{cor:firstdifferenceaffine}).

In section \ref{section:Algorithms} we present two algorithms implementing recursive reproductions of these two types of subsets: algorithm A which generates a quasi-regular structure and algorithm B which generates a regular or an irregular structure depending on the presence or absence of obstacles. In section \ref{section:Proof}, we use these algorithms to give a constructive proof of our main theorem. 
In the last section, we show that part ii) of the main theorem gives a family 
of sets whose transit time grows with the complexity with exponent
which is larger than that observed on average in the system $[0,x,1]$ 
[figure \ref{fig:tauoriginal} (a)], but smaller than the exponent associated 
with the largest fluctuations [figure \ref{fig:tauoriginal} (b)].
\bigskip

\bigskip\noindent
{\sc Acknowledgements:} \/ 
The first author thanks Indonesia Endowment Fund for Education (LPDP) for the financial support.

%-------------------------------------------------------------------
\section{Reproduction of subsets}\label{section:ReproductionsOfSubsets}

Throughout this paper we assume that the median sequence is non-decreasing. A subset of $\left[x_1',\ldots,x_n'\right]$, where $n\geqslant 3$ is odd and $x_1'\leqslant\cdots\leqslant x_n'$, is \textit{ready} (or is an \textit{R-subset}) if
\begin{enumerate}
\item[i)] it is of the form $\left[x_i',\ldots,x_{i+k}'\right]$ for some $i\in\{1,\ldots,n\}$ and $k\in\{0,\ldots,n-i\}$;
\item[ii)] $x_1'=\mathcal{M}_n$;
\item[iii)] the sequence of \textit{first differences} $\left(\Delta x_j\right)_{j=i+1}^{i+k}$, where $\Delta x_j:=x_j-x_{j-1}$, is positive and non-decreasing.
\end{enumerate}
Notice that iii) implies that an R-subset contains no repetition. For example, $[4,6]$ and $[4,6,8]$ are R-subsets of $[2,2,3,4,6,8,9]$, whereas $[4,8]$, $[3,4,6]$, and $[4,6,8,9]$ are not, since they violate i) only, ii) only, and iii) only, respectively. Clearly, an R-subset has size at most $\frac{n+1}{2}$.

For convenience, let us agree that the elements of an R-subset are always indexed according to their ordering on the line. In other words, by writing an R-subset as $\left[u_0,\ldots,u_k\right]$ we mean that $u_0\leqslant\cdots\leqslant u_k$. In an \mmm\ orbit, an R-subset generates ---as the median walks across its elements--- a similar structure whose size is twice the number of its first differences. This is detailed in the following result.\bigskip

\begin{rl}
If $\zeta=\left[u_0,\ldots,u_k\right]\subseteq\xi_n$ is ready and $x_{n+1},x_{n+2}\geqslant u_k$, then for every $j\in\{2,\ldots,2k+1\}$ we have
\begin{equation}\label{eq:firstdifference}
x_{n+j}=\begin{cases}
\frac{\Delta u_\ell}{2}n+\ell \Delta u_\ell + u_{\ell-1}&\text{if }j=2\ell\text{ for some }\ell\in\mathbb{N}\\
\frac{\Delta u_\ell}{2}n+\ell \Delta u_\ell + u_\ell &\text{if }j=2\ell+1\text{ for some }\ell\in\mathbb{N}.
\end{cases}
\end{equation}
\end{rl}

\begin{proof}
Assume that $\zeta\subseteq\xi_n$ is ready and $x_{n+1},x_{n+2}\geqslant u_k$. We will prove by strong induction that
\begin{equation}\label{eq:prooffirstdifference}
\textnormal{For every }j\in\{2,\ldots,2k+1\}\textnormal{ we have \eqref{eq:firstdifference} and }x_{n+j}\geqslant u_k\textnormal{.}
\end{equation}

For the base case $j=2$, \eqref{eq:firstdifference} is true because
$$x_{n+2}=(n+2)\mathcal{M}_{n+1}-(n+1)\mathcal{M}_n=(n+2)\langle u_0,u_1\rangle-(n+1)u_0=\frac{\Delta u_1}{2}n+\ell \Delta u_1 + u_0,$$
where the condition $\mathcal{M}_{n+1}=\langle u_0,u_1\rangle$ follows from $x_{n+1}\geqslant u_k$ which is a part of our assumption. The inequality $x_{n+2}\geqslant u_k$ is also a part of our assumption.

Now suppose \eqref{eq:prooffirstdifference} holds for every $j\in\left\{2,\ldots,r-1\right\}$, for some $r\in\{3,\ldots,2k+1\}$. We shall show that it also holds for $j=r$. As a part of our assumption we have $x_{n+1}\geqslant u_k$. Combining this with the inductive hypothesis, we have $x_{n+1},\ldots,x_{n+r-1}\geqslant u_k$, so
$$\mathcal{M}_{n+r-1}=\begin{cases}
\langle u_{\ell-1},u_\ell\rangle&\text{if }r=2\ell\\
u_{\ell-1}&\text{if }r=2\ell+1
\end{cases}$$
and
$$\mathcal{M}_{n+r-2}=\begin{cases}
u_\ell&\text{if }r=2\ell\\
\langle u_{\ell-1},u_\ell\rangle&\text{if }r=2\ell+1.
\end{cases}$$
Using these, straightforward applications of \eqref{eq:MMMmedian} in both cases yield \eqref{eq:firstdifference}. Moreover, the fact that $\left(\Delta u_i\right)_{i=1}^k$ is positive and non-decreasing implies that $x_{n+r}>x_{n+r-1}$. Since the latter is at least $u_k$ by inductive hypothesis, the proof is complete.
\end{proof}\bigskip

\noindent Let us make some remarks regarding the reproduction lemma.

Property iii) in the definition of ready is necessary. Indeed, the above theorem is not valid for the subset $[0,100,101,110]$ of any $\xi_9$ for which $x_{10}\geqslant 110$. Notice that this subset satisfies i) and ii), but not iii) since its first differences form a non-monotonic sequence. As a consequence, we have $x_{11}=600\geqslant 110$, but $x_{12}=106<110$ which becomes an obstacle.

The sequence $\left(\Delta x_{n+j}\right)_{j=3}^{2k+1}$ of the first differences of the new iterates is \textit{positive but not necessarily non-decreasing}. Indeed, from \eqref{eq:firstdifference} we have that, for every $j\in\{3,\ldots,2k+1\}$,
\begin{equation}\label{eq:firstdifferenceresult}
\Delta x_{n+j}=\begin{cases}
\frac{\Delta u_\ell-\Delta u_{\ell-1}}{2}n+\ell\left(\Delta u_{\ell}-\Delta u_{\ell-1}\right)+\Delta u_{\ell-1}&\text{if }j=2\ell\\
\Delta u_\ell&\text{if }j=2\ell+1,
\end{cases}
\end{equation}
which is positive because the $\Delta u_\ell$s are positive and non-decreasing [property iii)]. Moreover, for the ready subset $[0,2,6]$ of any $\xi_9$ such that $x_{10}\geqslant 6$, we have $\left(x_{11},x_{12},x_{13},x_{14}\right)=(11,13,28,32)$ whose first differences form the non-monotonic sequence $(2,15,4)$.

If $\zeta$ is an arithmetic progression of order $d$ (a progression whose $n$-th term is given by a polynomial
of degree $d$ in $n$ \cite[theorem 6]{Dlab}), then the leading and constant coefficients of $x_{n+j}$ in \eqref{eq:firstdifference} are polynomials of degrees at most $d-1$ and at most $d$, respectively, whereas those of $\Delta x_{n+j}$ in \eqref{eq:firstdifferenceresult} are polynomials of degrees at most $d-2$ and $d-1$, respectively. In the rest of the paper we shall deal only with the case $d=1$.

In the case $d=1$, namely that in which $\zeta$ is an arithmetic progression (of order $1$), we shall prove that the condition $x_{n+2}\geqslant u_k$ always holds, and that the new points given by \eqref{eq:firstdifference} also form an arithmetic progression with the same modulus. This is stated in part i) of the following corollary, which will be the basis for the construction of a quasi-regular phase. Part ii) contains the special case $k=1$, which is the basis for the construction of a regular or an irregular phase, depending on the presence of obstacles.\bigskip

\begin{corollary}\label{cor:firstdifferenceaffine}
\textcolor{white}{a}
\begin{enumerate}
\item[i)] Suppose that $\zeta=\left[u_0,\ldots,u_k\right]\subseteq\xi_n$ is a ready arithmetic progression of modulus $b>0$. If $x_{n+1}\geqslant u_k$, then for every $j\in\{2,\ldots,2k+1\}$ we have
\begin{equation}\label{eq:corrAP}
x_{n+j}=b\left(\frac{n}{2}+j-1\right)+u_0.
\end{equation}
\item[ii)] Suppose that $\zeta=\left[u_0,u_1\right]\subseteq\xi_n$ is ready. If $x_{n+1}\geqslant u_1$, then we have
\begin{equation}\label{eq:twoelement}
x_{n+2}=\frac{u_1-u_0}{2}n+u_1\qquad\text{and}\qquad x_{n+3}=\frac{u_1-u_0}{2}n+2u_1-u_0.
\end{equation}
\end{enumerate}
\end{corollary}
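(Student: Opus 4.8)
The plan is to deduce both parts directly from the reproduction lemma, the only work being to supply a hypothesis that the lemma requires but the corollary does not assume. The reproduction lemma needs $x_{n+1},x_{n+2}\geqslant u_k$, while the corollary assumes only $x_{n+1}\geqslant u_k$; so the first step is to show that, when $\zeta$ is an arithmetic progression of modulus $b>0$, the inequality $x_{n+2}\geqslant u_k$ holds automatically. Once this is done, \eqref{eq:firstdifference} applies, and what remains is a substitution.

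For part i), write $u_\ell=u_0+\ell b$ and $\Delta u_\ell=b$ for $\ell\in\{1,\dots,k\}$. The base-case computation in the proof of the reproduction lemma uses only $x_{n+1}\geqslant u_k$ (to identify $\mathcal{M}_{n+1}=\langle u_0,u_1\rangle$) together with property ii) of readiness (which gives $\mathcal{M}_n=u_0$), and yields
\[
x_{n+2}=(n+2)\langle u_0,u_1\rangle-(n+1)u_0=\tfrac{b}{2}\,n+b+u_0 .
\]
Since $\zeta$ is an R-subset of the odd-sized set $\xi_n$ it has size $k+1\leqslant\tfrac{n+1}{2}$, hence $k\leqslant\tfrac{n-1}{2}$ and therefore $\tfrac{n}{2}+1-k\geqslant\tfrac32>0$; as $b>0$, this gives $x_{n+2}-u_k=b\bigl(\tfrac{n}{2}+1-k\bigr)>0$. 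Thus both hypotheses of the reproduction lemma are met, so \eqref{eq:firstdifference} holds for every $j\in\{2,\dots,2k+1\}$. Substituting $\Delta u_\ell=b$, $u_{\ell-1}=u_0+(\ell-1)b$, $u_\ell=u_0+\ell b$ into the two cases of \eqref{eq:firstdifference} gives $\tfrac{b}{2}n+\ell b+(u_0+(\ell-1)b)=b\bigl(\tfrac n2+2\ell-1\bigr)+u_0$ when $j=2\ell$, and $\tfrac{b}{2}n+\ell b+(u_0+\ell b)=b\bigl(\tfrac n2+2\ell\bigr)+u_0$ when $j=2\ell+1$; in both parities the value equals $b\bigl(\tfrac n2+j-1\bigr)+u_0$, which is \eqref{eq:corrAP}.

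For part ii), note that a ready two-element subset $[u_0,u_1]$ is automatically an arithmetic progression of modulus $b:=u_1-u_0$, positive by property iii); so part i) applies with $k=1$, and reading \eqref{eq:corrAP} at $j=2$ and $j=3$ gives $x_{n+2}=b\bigl(\tfrac n2+1\bigr)+u_0=\tfrac{u_1-u_0}{2}n+u_1$ and $x_{n+3}=b\bigl(\tfrac n2+2\bigr)+u_0=\tfrac{u_1-u_0}{2}n+2u_1-u_0$, which is \eqref{eq:twoelement}. The argument is essentially bookkeeping; the only place calling for a genuine (if minor) observation is the automatic validity of $x_{n+2}\geqslant u_k$, which is exactly where the size bound $k\leqslant\tfrac{n-1}{2}$ for an R-subset and the positivity of $b$ both enter. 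The one thing to be careful about is to keep part i) uniform in $j$, so that the single formula \eqref{eq:corrAP} subsumes both parities and part ii) becomes a literal specialisation rather than a separate calculation.
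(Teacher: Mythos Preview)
Your proof is correct and follows essentially the same approach as the paper: compute $x_{n+2}$ from the medians, use the size bound $k\leqslant\tfrac{n-1}{2}$ and $b>0$ to verify $x_{n+2}\geqslant u_k$ so the reproduction lemma applies, then specialise to $k=1$ for part ii). You are simply more explicit than the paper in carrying out the substitution of $\Delta u_\ell=b$ into \eqref{eq:firstdifference} to arrive at \eqref{eq:corrAP}.
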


\begin{proof}
First we prove part i). If the given conditions hold, then $\mathcal{M}_n=u_0$ and $\mathcal{M}_{n+1}=\langle u_0,u_0+b\rangle=u_0+\frac{b}{2}$ since $x_{n+1}\geqslant u_k$, so by \eqref{eq:MMMmedian} one obtains $x_{n+2}=b\left(\frac{n}{2}+1\right)+u_0\geqslant kb+u_0=u_k$ since $k\leqslant\frac{n-1}{2}$ and $b>0$. Hence, the reproduction lemma applies, giving part i). Setting $k=1$ gives part ii).
\end{proof}\bigskip

Notice that \eqref{eq:corrAP} can be rewritten as
$$x_{n+j}=\left(\frac{bn}{2}+u_0\right)+(j-1)b$$
showing that the location of the new arithmetic progression relative to the old one depends on $b$, $n$, and $u_0$. In addition, \eqref{eq:twoelement} implies
$$x_{n+3}-x_{n+2}=\left(\frac{u_1-u_0}{2}n+2u_1-u_0\right)-\left(\frac{u_1-u_0}{2}n+u_1\right)=u_1-u_0,$$
explaining \eqref{eq:gaptransfer} in a more general context.

%-------------------------------------------------------------------
\section{Recursive reproductions of subsets}\label{section:Algorithms}

We have seen in corollary \ref{cor:firstdifferenceaffine} that a ready arithmetic progression generates a new arithmetic progression as the median walks across its elements. Naturally, we are interested in the case where the new arithmetic progression becomes ready at a future time, and hence generates a third arithmetic progression in the orbit. The process can continue if the third progression eventually becomes ready and generates a fourth progression, and so on. In such circumstance we speak of \textit{recursive reproductions} of arithmetic progressions.

It turns out that recursive reproductions of arithmetic progressions 
occur in some normal form orbits, where they generate quasi-regular 
structures.
In section \ref{subs:ap} we distil this process in
an algorithm which describes it in a general setting (algorithm A). 
In section \ref{subs:pairs}, we then modify the algorithm to implement 
recursive reproductions of pairs (algorithm B).

Later we shall apply these algorithms to construct families of initial sets which establish our main theorem.

\subsection{Recursive reproductions of arithmetic progressions}\label{subs:ap} We begin with an example. Consider the normal form orbit of order $261$ (figure \ref{fig:261}). The set $\gamma_{641}$ contains the subset $\AP_0:=\left[x_{630},x_{637},x_{638}\right]$ which is a ready arithmetic progression. Since $x_{642}\geqslant x_{638}$, then the subset generates $$\AP_1:=\left[x_{643},x_{644},x_{645},x_{646}\right].$$

\begin{figure}%[h]
\centering
\input{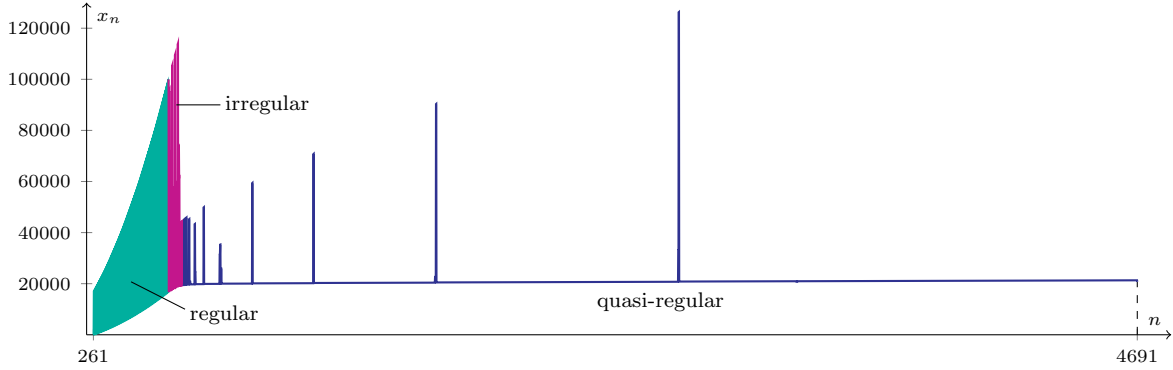}
\caption{\label{fig:261}\small The normal form orbit of order $261$ which features a regular, irregular, and quasi-regular structure.}
\end{figure}

Next, $\AP_1\subseteq\gamma_{647}$ is ready and $x_{647}\geqslant x_{646}$, so $\AP_1$ generates the next arithmetic progression $$\AP_2:=\left[x_{649},x_{650},x_{651},x_{652},x_{653},x_{654}\right].$$

\begin{figure}%[h]
\centering
\begin{tikzpicture}
\begin{axis}[
	xmin=628.3571429,
	xmax=731.2142857,
	ymin=15000,
	ymax=48000,
    ytick={20000,30000,40000},yticklabels={20000,30000,40000},
    xtick={629,641,647,728},
	axis lines=middle,
	samples=100,
	xlabel=$n$,
	ylabel=$x_n$,
	width=16cm,
	height=6cm,
	clip=false,
	axis lines=middle,
    x axis line style=->,
    y axis line style=->,
    scaled y ticks=false
]
\draw[dashed] (axis cs:629,15000)--(axis cs:629, 19401.12500);
\draw[dashed] (axis cs:641,15000)--(axis cs:641, 19641.50000);
\draw[dashed] (axis cs:647,15000)--(axis cs:647, 45241.93750);
\draw[dashed] (axis cs:728,15000)--(axis cs:728, 19824.75000);
\draw[thick,color=newpurple] plot coordinates {(axis cs:629, 19401.12500) (axis cs:630, 19401.87500) (axis cs:631, 43577.81250) (axis cs:632, 43655.18750) (axis cs:633, 19322.50000) (axis cs:634, 19322.75000) (axis cs:635, 44286.43750) (axis cs:636, 44365.31250) (axis cs:637, 19402.12500) (axis cs:638, 19402.37500) (axis cs:639, 44363.56250) (axis cs:640, 44441.93750) (axis cs:641, 19641.50000)};
\draw[thick,color=newblue] plot coordinates {(axis cs:641, 19641.50000) (axis cs:642, 19642.25000) (axis cs:643, 19482.25000)};
\draw[ultra thick,color=newlightblue] plot coordinates {(axis cs:643, 19482.25000) (axis cs:644, 19482.50000) (axis cs:645, 19482.75000) (axis cs:646, 19483.)}; %AP1 panjang 4
\node[below] at (axis cs:644.5, 19482.625) {\scriptsize $\AP_1$};
\draw[thick,color=newblue] plot coordinates {(axis cs:646, 19483.) (axis cs:647, 45241.93750) (axis cs:648, 45321.81250) (axis cs:649, 19563.37500)};
\draw[ultra thick,color=newlightblue] plot coordinates {(axis cs:649, 19563.37500) (axis cs:650, 19563.62500) (axis cs:651, 19563.87500) (axis cs:652, 19564.12500) (axis cs:653, 19564.37500) (axis cs:654, 19564.62500)}; %AP2 panjang 6
\node[below] at (axis cs:651.5,19564) {\scriptsize $\AP_2$};
\draw[thick,color=newblue] plot coordinates {(axis cs:654, 19564.62500) (axis cs:655, 45805.81250) (axis cs:656, 45886.18750) (axis cs:657, 19645.50000)};
\draw[ultra thick,color=newlightblue] plot coordinates {(axis cs:657, 19645.50000) (axis cs:658, 19645.75000) (axis cs:659, 19646.) (axis cs:660, 19646.25000) (axis cs:661, 19646.50000) (axis cs:662, 19646.75000) (axis cs:663, 19647.) (axis cs:664, 19647.25000) (axis cs:665, 19647.50000) (axis cs:666, 19647.75000)}; %AP3 panjang 10
\node[below] at (axis cs:661.5, 19646.625) {\scriptsize $\AP_3$};
\draw[thick,color=newblue] plot coordinates {(axis cs:666, 19647.75000) (axis cs:667, 45202.43750) (axis cs:668, 45279.31250) (axis cs:669, 19892.37500) (axis cs:670, 19893.12500) (axis cs:671, 20732.62500) (axis cs:672, 20735.87500) (axis cs:673, 19729.62500)};
\draw[ultra thick,color=newlightblue] plot coordinates {(axis cs:673, 19729.62500) (axis cs:674, 19729.87500) (axis cs:675, 19730.12500) (axis cs:676, 19730.37500) (axis cs:677, 19730.62500) (axis cs:678, 19730.87500) (axis cs:679, 19731.12500) (axis cs:680, 19731.37500) (axis cs:681, 19731.62500) (axis cs:682, 19731.87500) (axis cs:683, 19732.12500) (axis cs:684, 19732.37500) (axis cs:685, 19732.62500) (axis cs:686, 19732.87500) (axis cs:687, 19733.12500) (axis cs:688, 19733.37500) (axis cs:689, 19733.62500) (axis cs:690, 19733.87500)}; %AP4 panjang 18
\node[below] at (axis cs:681.5,19731.75) {\scriptsize $\AP_4$};
\draw[thick,color=newblue] plot coordinates {(axis cs:690, 19733.87500) (axis cs:691, 43314.50000) (axis cs:692, 43383.) (axis cs:693, 24350.68750) (axis cs:694, 24364.06250) (axis cs:695, 19816.50000)};
\draw[ultra thick,color=newlightblue] plot coordinates {(axis cs:695, 19816.50000) (axis cs:696, 19816.75000) (axis cs:697, 19817.) (axis cs:698, 19817.25000) (axis cs:699, 19817.50000) (axis cs:700, 19817.75000) (axis cs:701, 19818.) (axis cs:702, 19818.25000) (axis cs:703, 19818.50000) (axis cs:704, 19818.75000) (axis cs:705, 19819.) (axis cs:706, 19819.25000) (axis cs:707, 19819.50000) (axis cs:708, 19819.75000) (axis cs:709, 19820.) (axis cs:710, 19820.25000) (axis cs:711, 19820.50000) (axis cs:712, 19820.75000) (axis cs:713, 19821.) (axis cs:714, 19821.25000) (axis cs:715, 19821.50000) (axis cs:716, 19821.75000) (axis cs:717, 19822.) (axis cs:718, 19822.25000) (axis cs:719, 19822.50000) (axis cs:720, 19822.75000) (axis cs:721, 19823.) (axis cs:722, 19823.25000) (axis cs:723, 19823.50000) (axis cs:724, 19823.75000) (axis cs:725, 19824.) (axis cs:726, 19824.25000) (axis cs:727, 19824.50000) (axis cs:728, 19824.75000)}; %AP5 panjang 34
\node[below] at (axis cs:711.5,19820.625) {\scriptsize $\AP_5$};

\fill[newlightblue] (axis cs:630, 19401.87500) circle (2pt);
\fill[newlightblue] (axis cs:637, 19402.12500) circle (2pt);
\fill[newlightblue] (axis cs:638, 19402.37500) circle (2pt);
\node[below] at (axis cs:634,19402) {\scriptsize $\AP_0$};
\end{axis}
\end{tikzpicture}
\caption{\label{fig:QR261}\small
First few successive arithmetic progressions in the quasi-regular phase of the normal form orbit of order $261$.}
\end{figure}
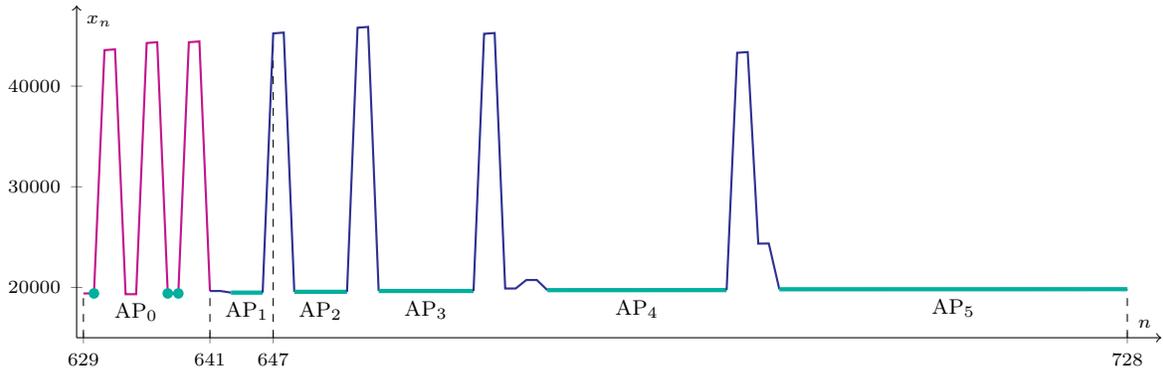

For this particular orbit, the process continues in the same way until ten arithmetic progressions $\AP_1$, \ldots, $\AP_{10}$ are generated\footnote{$\AP_{10}=\left[x_{1717},\ldots,x_{2742}\right]\subseteq\gamma_{2740}$ is not intact since $x_{1966}<x_{284}<x_{1967}$, hence it never becomes ready.} (the first five are shown in figure \ref{fig:QR261}), with the sequence of time steps $n_i$ at which $\AP_i$ is ready being
$$\left(n_0,n_1,\ldots,n_{9}\right)=\left(641, 647, 655, 671, 693, 729, 803, 935, 1195, 1715\right).$$
From the corollary, these arithmetic progressions have the same modulus ---which is $\frac{1}{4}$--- and their lengths grow exponentially.
$$
\begin{array}{l|cccccccccc}
\text{arithmetic progression} & \AP_1 & \AP_2& \AP_3& \AP_4& \AP_5& \AP_6& \AP_7& \AP_8& \AP_9& \AP_{10}\\
\hline
\text{first term} &   x_{643}& x_{649}& x_{657}& x_{673}& x_{695}& x_{731}& x_{805}& x_{937}& x_{1197}& x_{1717}\\
\hline
\text{last term} &   x_{646}& x_{654}& x_{666}& x_{690}& x_{728}& x_{796}& x_{934}& x_{1194}& x_{1710}& x_{2742}\\
\hline
\text{length} &   4& 6& 10& 18& 34& 66& 130& 258& 514& 1026
\end{array}
$$

Motivated by the above example, let us now develop an algorithm 
which implements recursive reproductions of arithmetic progressions 
in a general orbit, and produces explicit formulae for all the 
straight segments in the generated quasi-regular structure. 
The input is a set containing a ready arithmetic progression of length three. 
Since the \mmm\ preserves affine-equivalence\footnote{If $\left[x_1,\ldots,x_n\right]\mapsto\left[x_1,\ldots,x_n,x_{n+1}\right]$ then $\left[ax_1+b,\ldots,ax_n+b\right]\mapsto\left[ax_1+b,\ldots,ax_n+b,ax_{n+1}+b\right]$ for every $a,b\in\mathbb{R}$.}, 
we can assume that the arithmetic progression is $[0,1,2]$. 
The algorithm consists of a loop. At each iteration, it generates
the arithmetic progression prescribed by part i) of corollary 
\ref{cor:firstdifferenceaffine} and checks whether this progression 
becomes ready at some future time step. If so, the next iteration is performed, otherwise the algorithm stops.\bigskip

\begin{algorithma}[Recursive reproductions of arithmetic progressions]\textcolor{white}{a}
\begin{itemize}[leftmargin=2cm,itemsep=0pt]
\item[{\sc input:}] A set $\xi_{n_0}$ with R-subset $\AP_0=[0,1,2]$, where $n_{0}\geqslant 5$ is odd and $x_{n_0+1}\geqslant\max\left(\AP_0\right)$.
\item[{\sc output:}] Sequences $\left(n_0,\ldots,n_{N-1}\right)$ and
$\left(\AP_1,\ldots,\AP_N\right)=\left(\left[x_{n_i+2},\ldots,x_{n_i+2\left|\AP_i\right|-1}\right]\right)_{i=0}^{N-1}$,
where
\begin{enumerate}
\item[i)] For every $i\in\{0,\ldots,N-1\}$, $\AP_i\subseteq \xi_{n_i}$ is ready and generates
$\AP_{i+1}$ according to part i) of corollary \ref{cor:firstdifferenceaffine}.
\item[ii)] $N$ is the largest positive integer $i$ such that $\AP_i$ exists.
\end{enumerate}
\end{itemize}
\begin{enumerate}[font=\scriptsize]
\item[\textnormal{1}\,\,\,\,] $\textnormal{go-on}:=\textit{true}$
\item[\textnormal{2}\,\,\,\,] $i:=0$
\item[\textnormal{3}\,\,\,\,] \textnormal{\textbf{while $\textnormal{go-on}$ do}}
\item[\textnormal{4}\,\,\,\,] \hspace*{0.4cm} \textnormal{\textbf{for $j\in\left\{2,\ldots,2\left|\AP_i\right|-1\right\}$ do}}
\item[\textnormal{5}\,\,\,\,] \hspace*{0.4cm}\hspace*{0.4cm} $x_{n_i+j}:=j+i-1+\frac{1}{2}\sum_{\ell=0}^i n_\ell$
\item[\textnormal{6}\,\,\,\,] \hspace*{0.4cm} \textnormal{\textbf{end do}}
\item[\textnormal{7}\,\,\,\,] \hspace*{0.4cm} $\AP_{i+1}:=\left[x_{n_i+2},\ldots,x_{n_i+2\left|\AP_i\right|-1}\right]$
\item[\textnormal{8}\,\,\,\,] \hspace*{0.4cm} $k:=n_{i}+2$
\item[\textnormal{9}\,\,\,\,] \hspace*{0.4cm} $\textnormal{go-on}:=\textit{false}$
\item[\textnormal{10}\,\,\,\,] \hspace*{0.4cm} \textnormal{\textbf{while $\mathcal{M}_{k-1}\neq\mathcal{M}_k$ and $\mathcal{M}_k\leqslant x_{n_i+2}$ do}}
\item[\textnormal{11}\,\,\,\,] \hspace*{0.4cm}\hspace*{0.4cm} \textnormal{\textbf{if $\AP_{i+1}\subseteq\xi_k$\textnormal{ is ready} and $x_{k+1}\geqslant\max\left(\AP_{i+1}\right)$ then}}
\item[\textnormal{12}\,\,\,\,] \hspace*{0.4cm}\hspace*{0.4cm} \hspace*{0.4cm} $n_{i+1}:=k$
\item[\textnormal{13}\,\,\,\,] \hspace*{0.4cm}\hspace*{0.4cm} \hspace*{0.4cm} $\textnormal{go-on}:=\textit{true}$
\item[\textnormal{14}\,\,\,\,] \hspace*{0.4cm}\hspace*{0.4cm} \textnormal{\textbf{end if}}
\item[\textnormal{15}\,\,\,\,] \hspace*{0.4cm}\hspace*{0.4cm} $k:=k+2$
\item[\textnormal{16}\,\,\,\,] \hspace*{0.4cm} \textnormal{\textbf{end do}}
\item[\textnormal{17}\,\,\,\,] \hspace*{0.4cm} $i:=i+1$
\item[\textnormal{18}\,\,\,\,] \textnormal{\textbf{end do}}
\item[\textnormal{19}\,\,\,\,] \textnormal{\textbf{return} $\left(\left(n_0,\ldots,n_{N-1}\right),\left(\AP_1,\ldots,\AP_N\right)\right)$}
\end{enumerate}
\end{algorithma}\bigskip

Notice that an input set with the prescribed properties exists, e.g., $[-3,-3,0,1,2]$.
We now prove the correctness of the algorithm.\bigskip

\begin{proposition}
Algorithm A is correct and terminates if the input set stabilises.
\end{proposition}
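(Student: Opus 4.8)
The plan is to verify that Algorithm A does exactly what its specification claims: that each $\AP_i$ it produces is genuinely a ready arithmetic progression of the underlying \mmm\ orbit, that the formula on line 5 correctly computes its elements, that the inner loop correctly locates the next time $n_{i+1}$ at which $\AP_{i+1}$ is ready (or correctly determines that no such time exists, in which case the algorithm halts), and that termination is guaranteed when the input set stabilises. So I would structure the argument as an induction on the loop counter $i$, with the inductive hypothesis being: upon entering the body of the outer \texttt{while} loop with counter value $i$, the number $n_i$ has been correctly set, $\AP_i\subseteq\xi_{n_i}$ is a ready arithmetic progression of modulus $1$, and $x_{n_i+1}\geqslant\max(\AP_i)$.

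First I would establish the base case $i=0$ directly from the \textsc{input} hypotheses: $\AP_0=[0,1,2]$ is by assumption a ready subset of $\xi_{n_0}$, it is trivially an arithmetic progression of modulus $1$, and $x_{n_0+1}\geqslant\max(\AP_0)$ is assumed. For the inductive step, assuming the hypothesis at stage $i$, I would invoke part i) of corollary \ref{cor:firstdifferenceaffine} with $b=1$, $u_0=\tfrac12\sum_{\ell=0}^{i-1}n_\ell+i-1$ (the first term of $\AP_i$, which by induction equals what line 5 produced at the previous stage when $j=2$, i.e.\ $x_{n_{i-1}+2}$ — here I need to check the indexing carefully so that $u_0$ for $\AP_i$ matches $x_{n_{i-1}+2}$; for $i=0$ it is $u_0=0$). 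The corollary then gives $x_{n_i+j}=1\cdot(\tfrac{n_i}{2}+j-1)+u_0$ for $j\in\{2,\ldots,2|\AP_i|-1\}$, and substituting the inductive formula for $u_0$ yields exactly $x_{n_i+j}=j+i-1+\tfrac12\sum_{\ell=0}^{i}n_\ell$, which is line 5. Hence $\AP_{i+1}=[x_{n_i+2},\ldots,x_{n_i+2|\AP_i|-1}]$ is an arithmetic progression of modulus $1$ and length $2|\AP_i|-2$, matching the stated output form. The inner \texttt{while} loop (lines 10–16) then scans $k=n_i+2,n_i+4,\ldots$ and sets $n_{i+1}:=k$ and \texttt{go-on}$:=\textit{true}$ the first time it finds $\AP_{i+1}\subseteq\xi_k$ ready with $x_{k+1}\geqslant\max(\AP_{i+1})$; since it overwrites $n_{i+1}$ on every such occurrence, on loop exit $n_{i+1}$ holds the last such $k$ encountered before the loop condition fails — I should check this matches the intended semantics, or argue that the first and last coincide here because once the median has walked past $\max(\AP_{i+1})$ the condition $\mathcal{M}_k\leqslant x_{n_i+2}$ fails, so at most one valid $k$ occurs. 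If \texttt{go-on} remains \textit{false}, no future time makes $\AP_{i+1}$ ready, so $\AP_{i+1}$ does not ``exist'' in the sense of the specification, $N=i$, and line 19 returns the correct truncated sequences; this discharges requirement ii).

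For termination I would argue as follows. The inner loop terminates because its guard requires both $\mathcal{M}_{k-1}\neq\mathcal{M}_k$ and $\mathcal{M}_k\leqslant x_{n_i+2}$; the median sequence is non-decreasing (by \cite[theorem 2.1]{ChamberlandMartelli} and the excerpt's standing assumption), strictly increasing while distinct, and if the orbit has not stabilised it must eventually exceed the fixed bound $x_{n_i+2}$ — and if it has stabilised then $\mathcal{M}_{k-1}=\mathcal{M}_k$ eventually; either way the guard fails after finitely many steps. The outer loop terminates because, by hypothesis, the input set stabilises: once stabilisation occurs at some time $\tau$, no subset of any $\xi_k$ with $k\geqslant\tau$ can be an R-subset containing more than one element beyond the stabilised value without violating property iii) (a stabilised orbit contributes repeated values, and the first-difference condition forbids repetitions), so for $i$ large enough the inner loop finds no valid $k$, \texttt{go-on} stays \textit{false}, and the outer loop exits. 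I expect the main obstacle to be the bookkeeping in the inner loop: precisely justifying that the value of $n_{i+1}$ on loop exit is the time at which $\AP_{i+1}$ is ready in the sense required for the next application of corollary \ref{cor:firstdifferenceaffine} — in particular that $\mathcal{M}_{n_{i+1}}=x_{n_i+2}=\min(\AP_{i+1})$, which is property ii) of readiness, is consistent with the loop having stopped at that $k$ rather than earlier or later, and that no \emph{earlier} time also qualifies (so that ``first such $k$'' and ``last such $k$'' agree). Establishing this requires tracking where the median sits as it walks across the obstacles inside the gap, and is the one place where a little care with the median recursion \eqref{eq:MMMmedian} is genuinely needed; the rest is routine induction.
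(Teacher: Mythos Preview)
Your approach is essentially the paper's: an induction (packaged there as a loop invariant) on the counter $i$, with corollary~\ref{cor:firstdifferenceaffine}~i) supplying the formula on line~5, and termination via the assumed stabilisation. Your anticipated ``main obstacle'' about inner-loop bookkeeping and first-versus-last $k$ is a non-issue: whatever value ends up in $n_{i+1}$, the if-condition on line~11 held for it, so readiness and $x_{n_{i+1}+1}\geqslant\max(\AP_{i+1})$ are certified directly---no median tracking is needed; the paper's termination argument is also slightly tidier, simply observing that the $k$ assigned on line~8 strictly increases between outer iterations.
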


\begin{proof}
Let $\mathcal{L}$ be the boolean expression
$$
[i=0]\lor\left[\left(\AP_{i-1}\subseteq\xi_{n_{i-1}}\text{ is ready}\right)
 \land \left(\AP_{i-1}\text{ generates }\AP_i\right)\right].
$$
We shall prove that $\mathcal{L}$ is a loop invariant for the outer 
while-loop (lines 3 to 18) of the algorithm.
Consider such a loop.
At the start of the first execution we have $i=0$, so $\mathcal{L}$ holds.
We next prove that $\mathcal{L}$ holds at the end of every execution 
of the statement-sequence of the loop. 
We split the proof into two cases:\bigskip

\noindent\textsc{\underline{Case I}}: At the start of the loop 
we have $i=0$. In this case $\text{go-on}=\textit{true}$, and the algorithm computes
$$\AP_1=\left[x_{n_0+2},\ldots,x_{n_0+5}\right],$$
where $x_{n_0+j}=\frac{n_0}{2}+j-1$ for every $j\in\{2,3,4,5\}$. 
At the end of the execution we have $i=1$, so the left expression 
in $\mathcal{L}$ is false, but the right one is true because 
$\AP_0\subseteq\xi_{n_0}$ is ready by the 
prescribed properties of $\xi_{n_0}$, and $\AP_0$
generates $\AP_1$, as easily checked. At the end of the loop, if $\text{go-on}=\textit{false}$, then the algorithm terminates. Otherwise, the algorithm continues and we proceed to Case II.\medskip

\noindent\textsc{\underline{Case II}}: At the start of the loop 
we have $i=i'\geqslant 1$.
At the end of the loop we have $i=i'+1\geqslant2$; since the left 
expression in $\mathcal{L}$ is false, we must show that the conjunction
on the right is true, namely that $\AP_{i'}\subseteq\xi_{n_{i'}}$ is ready 
and generates $\AP_{i'+1}$ according to part i) of corollary 
\ref{cor:firstdifferenceaffine}.
Since $\text{go-on}=\textit{true}$, in the previous execution the algorithm must have entered the
inner while-loop (lines 10 to 16) and executed the statement-sequence of the if-structure,
which means that $\AP_{i'}\subseteq\xi_{n_{i'}}$ is ready, proving the first half of the desired statement.
It now remains to prove that $\AP_{i'+1}$ agrees with part i) of corollary \ref{cor:firstdifferenceaffine}.
From the previous execution we know that
$$\AP_{i'}=\left[x_{n_{i'-1}+2},\ldots,x_{n_{i'-1}+2\left|\AP_{i'-1}\right|-1}\right],$$
where $x_{n_{i'-1}+j}=j+i'-2+\frac{1}{2}\sum_{\ell=0}^{i'-1} n_\ell$ for every
$j\in\left\{2,\ldots,2\left|\AP_{i'-1}\right|-1\right\}$, is an arithmetic progression with
modulus $1$ and first term $i'+\frac{1}{2}\sum_{\ell=0}^{i'-1} n_\ell$.
Therefore, by part i) of corollary \ref{cor:firstdifferenceaffine} (which can be applied because the first
half of the desired statement holds), we obtain
$$x_{n_{i'}+j}=1\cdot\left(\frac{n_{i'}}{2}+j-1\right)+\left(i'+\frac{1}{2}\sum_{\ell=0}^{i'-1} n_\ell\right)
=j+i'-1+\frac{1}{2}\sum_{\ell=0}^{i'} n_\ell$$
for every $j\in\{2,\ldots,2\left|\AP_{i'}\right|-1\}$, which is precisely how the algorithm
computes $\AP_{i'+1}$ in the execution being considered.
The proof that $\mathcal{L}$ is a loop invariant is complete.\bigskip

Next we prove that if the orbit of the input set stabilises, then
the algorithm terminates. 
We begin to show that each time the statement-sequence of the outer 
while-loop is executed, the value of $k$ assigned on line 8 increases (if previously assigned). 
The first execution always takes place. For any subsequent execution
to take place, the loop-control variable go-on must have value 
\textit{true}, which requires the flow to enter the inner while-loop 
and execute the statement-sequence of the if-structure.
Consequently, at the end of this previous iteration, the value of the 
recently-created variable $n_i$ is at least the value of $k$ as assigned 
on line 8 (in this previous execution).
In the current execution, line 8 then assigns to $k$ the value $n_i+2$, which is therefore larger than
its value defined on the same line in the previous execution. Since the input set stabilises,
then the algorithm must eventually reach a value of $k$ for which $\mathcal{M}_{k-1}=\mathcal{M}_k$.
In an execution where such a value of $k$ is reached, the inner while-loop is not performed,
and hence the execution ends with $\text{go-on}=\textit{false}$, terminating the algorithm.

Since the algorithm terminates, the integer $N$ stated in the {\sc output} exists.
Since the second alternative in $\mathcal{L}$ is proved to hold at the end of every iteration
which is not the first one, we have also proved that the algorithm returns the correct output.
\end{proof}\bigskip

In the algorithm we have $\left|\AP_0\right|=3$, and, by part i) of corollary \ref{cor:firstdifferenceaffine},
$\left|\AP_i\right|=2\left|\AP_{i-1}\right|-2$ for every $i\in\{1,\ldots,N\}$.
Solving this recursion gives
$$\left|\AP_i\right|=2^i+2\qquad\text{and}\qquad\left|\AP_0\right|+\left|\AP_1\right|+\cdots+\left|\AP_i\right|=2^{i+1}+2i-2$$
for every $i\in\{0,\ldots,N\}$.

\subsection{Recursive reproductions of pairs}\label{subs:pairs}

Let us now turn to recursive reproductions of pairs. We already know that such reproductions take place in the regular phase of a normal form orbit. Here, the recursive reproductions of pairs of linear iterates 
$$\left[x_t,x_{t+1}\right]\mapsto\left[x_{t+4},x_{t+5}\right]\mapsto \ldots\mapsto\left[x_{N_t+1},x_{N_t+2}\right],$$
where $t$ is the order, is based on the fact that, for every $\ell\in\{0,\ldots,\frac{N_t-t-3}{4}\}$, the pair $\left[x_{t+4\ell},x_{t+4\ell+1}\right]\subseteq \gamma_{t+4\ell+2}$ is ready and, since $x_{t+4\ell+3}\geqslant x_{t+4\ell+1}$, generates the pair $\left[x_{t+4\ell+4},x_{t+4\ell+5}\right]$.

In a regular phase, however, there are no obstacles. Our aim now is to modify algorithm A to implement recursive reproductions of pairs which, in general, may be perturbed by obstacles. To denote the reproducing pairs we will use $\PP_0$, \ldots, $\PP_N$ which will replace $\AP_0$, \ldots, $\AP_N$ in the algorithm.

Two modifications in the algorithm are required. First, the R-subset $\AP_0=[0,1,2]$ of the input set $\xi_{n_0}$ needs to be replaced by $\PP_0=[0,1]$. As a consequence, the for-loop (lines 4 to 6) consists only of two iterations, so it can be rewritten more simply as $x_{n_i+2}:=i+1+\frac{1}{2}\sum_{\ell=0}^i n_\ell$ and $x_{n_i+3}:=x_{n_i+2}+1$.

The second modification concerns termination of the algorithm. In the case of a quasi-regular phase, the algorithm terminates if stabilisation has been reached or the latest arithmetic progression never becomes ready (line 10). An irregular phase, however, does not typically end with stabilisation (recall figures \ref{fig:205} and \ref{fig:261}). Therefore, the condition $\mathcal{M}_{k-1}\neq\mathcal{M}_k$ (line 10) needs a replacement. In order to find a suitable replacement condition, let us study the first irregular phase in the normal form orbit of order $205$ (figure \ref{fig:205}).

For this orbit we have $N_{205}=456$, and hence the last reproduction of linear pairs in the regular phase is $\left[x_{453},x_{454}\right]\mapsto\left[x_{457},x_{458}\right]$. Now the elements of $\gamma_{458}$ are
$$0<1<\cdots<\underbrace{x_{449}<x_{450}}_{\text{a linear pair}}<\underbrace{x_{453}<x_{454}}_{\text{a linear pair}}<x_{207}<\underbrace{x_{457}<x_{458}}_{\text{a linear pair}}<x_{208}<\cdots<x_{455}<x_{456}.$$
Notice that the gap between the linear pairs $\left[x_{453},x_{454}\right]$ and $\left[x_{457},x_{458}\right]$ is not empty; it contains an obstacle, namely the first quadratic iterate $x_{207}$. Consequently, the median sequence requires more than two iterations to walk from the former pair to the latter. In fact, we have
$$\left(\mathcal{M}_n\right)_{n=457}^{461}=\left(x_{454},\left\langle x_{454},x_{207}\right\rangle,x_{207},\left\langle x_{207},x_{457}\right\rangle,x_{457}\right).$$
These medians generate four iterates
$$
\begin{array}{rcl}
x_{459}&=&-458\mathcal{M}_{457}+459\mathcal{M}_{458},\\
x_{460}&=&-459\mathcal{M}_{458}+460\mathcal{M}_{459},
\end{array}\qquad
\begin{array}{rcl}
x_{461}&=&-460\mathcal{M}_{459}+461\mathcal{M}_{460},\\
x_{462}&=&-461\mathcal{M}_{460}+462\mathcal{M}_{461}
\end{array}
$$
before the linear pair $\left[x_{457},x_{458}\right]$ becomes an R-subset of $\gamma_{461}$ and generates the next linear pair $\left[x_{463},x_{464}\right]$. See figure \ref{fig:205early}.

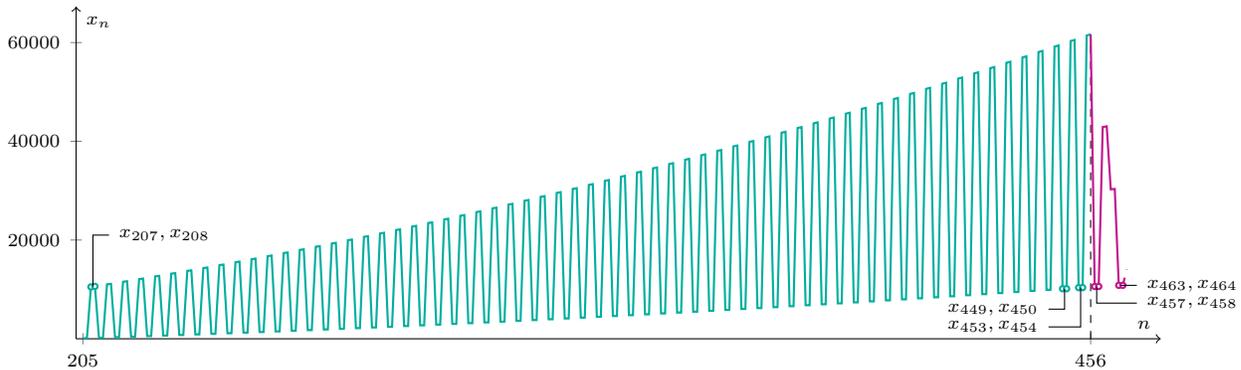
\begin{figure}
\centering
\begin{tikzpicture}
\begin{axis}[
	xmin=203.3116883,
	xmax=473.4415584,
	ymin=0,
	ymax=67270.63636,
    ytick={20000,40000,60000},yticklabels={20000,40000,60000},
    xtick={205,456},xticklabels={205,456},
	axis lines=middle,
	samples=100,
	xlabel=$n$,
	ylabel=$x_n$,
	width=16cm,
	height=6cm,
	clip=false,
	axis lines=middle,
    x axis line style=->,
    y axis line style=->,
    scaled y ticks=false
]
\draw[dashed] (axis cs:456,0)--(axis cs:456., 61664.750);

\draw[thick,color=newlightblue] plot coordinates {(axis cs:205., 102.50000) (axis cs:206., 103.50000) (axis cs:207., 10506.250) (axis cs:208., 10607.750) (axis cs:209., 207.) (axis cs:210., 208.) (axis cs:211., 11022.750) (axis cs:212., 11126.250) (axis cs:213., 313.50000) (axis cs:214., 314.50000) (axis cs:215., 11549.250) (axis cs:216., 11654.750) (axis cs:217., 422.) (axis cs:218., 423.) (axis cs:219., 12085.750) (axis cs:220., 12193.250) (axis cs:221., 532.50000) (axis cs:222., 533.50000) (axis cs:223., 12632.250) (axis cs:224., 12741.750) (axis cs:225., 645.) (axis cs:226., 646.) (axis cs:227., 13188.750) (axis cs:228., 13300.250) (axis cs:229., 759.50000) (axis cs:230., 760.50000) (axis cs:231., 13755.250) (axis cs:232., 13868.750) (axis cs:233., 876.) (axis cs:234., 877.) (axis cs:235., 14331.750) (axis cs:236., 14447.250) (axis cs:237., 994.50000) (axis cs:238., 995.50000) (axis cs:239., 14918.250) (axis cs:240., 15035.750) (axis cs:241., 1115.) (axis cs:242., 1116.) (axis cs:243., 15514.750) (axis cs:244., 15634.250) (axis cs:245., 1237.5000) (axis cs:246., 1238.5000) (axis cs:247., 16121.250) (axis cs:248., 16242.750) (axis cs:249., 1362.) (axis cs:250., 1363.) (axis cs:251., 16737.750) (axis cs:252., 16861.250) (axis cs:253., 1488.5000) (axis cs:254., 1489.5000) (axis cs:255., 17364.250) (axis cs:256., 17489.750) (axis cs:257., 1617.) (axis cs:258., 1618.) (axis cs:259., 18000.750) (axis cs:260., 18128.250) (axis cs:261., 1747.5000) (axis cs:262., 1748.5000) (axis cs:263., 18647.250) (axis cs:264., 18776.750) (axis cs:265., 1880.) (axis cs:266., 1881.) (axis cs:267., 19303.750) (axis cs:268., 19435.250) (axis cs:269., 2014.5000) (axis cs:270., 2015.5000) (axis cs:271., 19970.250) (axis cs:272., 20103.750) (axis cs:273., 2151.) (axis cs:274., 2152.) (axis cs:275., 20646.750) (axis cs:276., 20782.250) (axis cs:277., 2289.5000) (axis cs:278., 2290.5000) (axis cs:279., 21333.250) (axis cs:280., 21470.750) (axis cs:281., 2430.) (axis cs:282., 2431.) (axis cs:283., 22029.750) (axis cs:284., 22169.250) (axis cs:285., 2572.5000) (axis cs:286., 2573.5000) (axis cs:287., 22736.250) (axis cs:288., 22877.750) (axis cs:289., 2717.) (axis cs:290., 2718.) (axis cs:291., 23452.750) (axis cs:292., 23596.250) (axis cs:293., 2863.5000) (axis cs:294., 2864.5000) (axis cs:295., 24179.250) (axis cs:296., 24324.750) (axis cs:297., 3012.) (axis cs:298., 3013.) (axis cs:299., 24915.750) (axis cs:300., 25063.250) (axis cs:301., 3162.5000) (axis cs:302., 3163.5000) (axis cs:303., 25662.250) (axis cs:304., 25811.750) (axis cs:305., 3315.) (axis cs:306., 3316.) (axis cs:307., 26418.750) (axis cs:308., 26570.250) (axis cs:309., 3469.5000) (axis cs:310., 3470.5000) (axis cs:311., 27185.250) (axis cs:312., 27338.750) (axis cs:313., 3626.) (axis cs:314., 3627.) (axis cs:315., 27961.750) (axis cs:316., 28117.250) (axis cs:317., 3784.5000) (axis cs:318., 3785.5000) (axis cs:319., 28748.250) (axis cs:320., 28905.750) (axis cs:321., 3945.) (axis cs:322., 3946.) (axis cs:323., 29544.750) (axis cs:324., 29704.250) (axis cs:325., 4107.5000) (axis cs:326., 4108.5000) (axis cs:327., 30351.250) (axis cs:328., 30512.750) (axis cs:329., 4272.) (axis cs:330., 4273.) (axis cs:331., 31167.750) (axis cs:332., 31331.250) (axis cs:333., 4438.5000) (axis cs:334., 4439.5000) (axis cs:335., 31994.250) (axis cs:336., 32159.750) (axis cs:337., 4607.) (axis cs:338., 4608.) (axis cs:339., 32830.750) (axis cs:340., 32998.250) (axis cs:341., 4777.5000) (axis cs:342., 4778.5000) (axis cs:343., 33677.250) (axis cs:344., 33846.750) (axis cs:345., 4950.) (axis cs:346., 4951.) (axis cs:347., 34533.750) (axis cs:348., 34705.250) (axis cs:349., 5124.5000) (axis cs:350., 5125.5000) (axis cs:351., 35400.250) (axis cs:352., 35573.750) (axis cs:353., 5301.) (axis cs:354., 5302.) (axis cs:355., 36276.750) (axis cs:356., 36452.250) (axis cs:357., 5479.5000) (axis cs:358., 5480.5000) (axis cs:359., 37163.250) (axis cs:360., 37340.750) (axis cs:361., 5660.) (axis cs:362., 5661.) (axis cs:363., 38059.750) (axis cs:364., 38239.250) (axis cs:365., 5842.5000) (axis cs:366., 5843.5000) (axis cs:367., 38966.250) (axis cs:368., 39147.750) (axis cs:369., 6027.) (axis cs:370., 6028.) (axis cs:371., 39882.750) (axis cs:372., 40066.250) (axis cs:373., 6213.5000) (axis cs:374., 6214.5000) (axis cs:375., 40809.250) (axis cs:376., 40994.750) (axis cs:377., 6402.) (axis cs:378., 6403.) (axis cs:379., 41745.750) (axis cs:380., 41933.250) (axis cs:381., 6592.5000) (axis cs:382., 6593.5000) (axis cs:383., 42692.250) (axis cs:384., 42881.750) (axis cs:385., 6785.) (axis cs:386., 6786.) (axis cs:387., 43648.750) (axis cs:388., 43840.250) (axis cs:389., 6979.5000) (axis cs:390., 6980.5000) (axis cs:391., 44615.250) (axis cs:392., 44808.750) (axis cs:393., 7176.) (axis cs:394., 7177.) (axis cs:395., 45591.750) (axis cs:396., 45787.250) (axis cs:397., 7374.5000) (axis cs:398., 7375.5000) (axis cs:399., 46578.250) (axis cs:400., 46775.750) (axis cs:401., 7575.) (axis cs:402., 7576.) (axis cs:403., 47574.750) (axis cs:404., 47774.250) (axis cs:405., 7777.5000) (axis cs:406., 7778.5000) (axis cs:407., 48581.250) (axis cs:408., 48782.750) (axis cs:409., 7982.) (axis cs:410., 7983.) (axis cs:411., 49597.750) (axis cs:412., 49801.250) (axis cs:413., 8188.5000) (axis cs:414., 8189.5000) (axis cs:415., 50624.250) (axis cs:416., 50829.750) (axis cs:417., 8397.) (axis cs:418., 8398.) (axis cs:419., 51660.750) (axis cs:420., 51868.250) (axis cs:421., 8607.5000) (axis cs:422., 8608.5000) (axis cs:423., 52707.250) (axis cs:424., 52916.750) (axis cs:425., 8820.) (axis cs:426., 8821.) (axis cs:427., 53763.750) (axis cs:428., 53975.250) (axis cs:429., 9034.5000) (axis cs:430., 9035.5000) (axis cs:431., 54830.250) (axis cs:432., 55043.750) (axis cs:433., 9251.) (axis cs:434., 9252.) (axis cs:435., 55906.750) (axis cs:436., 56122.250) (axis cs:437., 9469.5000) (axis cs:438., 9470.5000) (axis cs:439., 56993.250) (axis cs:440., 57210.750) (axis cs:441., 9690.) (axis cs:442., 9691.) (axis cs:443., 58089.750) (axis cs:444., 58309.250) (axis cs:445., 9912.5000) (axis cs:446., 9913.5000) (axis cs:447., 59196.250) (axis cs:448., 59417.750) (axis cs:449., 10137.) (axis cs:450., 10138.) (axis cs:451., 60312.750) (axis cs:452., 60536.250) (axis cs:453., 10363.500) (axis cs:454., 10364.500) (axis cs:455., 61439.250) (axis cs:456., 61664.750)};

\draw[thick,color=newpurple] plot coordinates {(axis cs:456., 61664.750) (axis cs:457., 10592.) (axis cs:458., 10593.) (axis cs:459., 42896.12500) (axis cs:460., 43037.87500) (axis cs:461., 30271.62500) (axis cs:462., 30357.37500) (axis cs:463., 10823.50000) (axis cs:464., 10824.50000)};

\draw[thick,dashed,newpurple] (axis cs:464., 10824.50000) -- (axis cs:465., 14022.37500);

\draw[thick,newlightblue,fill=white] (axis cs:453., 10363.500) circle (1pt);
\draw[thick,newlightblue,fill=white] (axis cs:454., 10364.500) circle (1pt);

\draw[thick,newlightblue,fill=white] (axis cs:449., 10137.) circle (1pt);
\draw[thick,newlightblue,fill=white] (axis cs:450., 10138.) circle (1pt);

\draw[thick,newlightblue,fill=white] (axis cs:207., 10506.250) circle (1pt);
\draw[thick,newlightblue,fill=white] (axis cs:208., 10607.750) circle (1pt);

\draw[thick,newpurple,fill=white] (axis cs:457., 10592.) circle (1pt);
\draw[thick,newpurple,fill=white] (axis cs:458., 10593.) circle (1pt);

\draw[thick,newpurple,fill=white] (axis cs:463., 10823.50000) circle (1pt);
\draw[thick,newpurple,fill=white] (axis cs:464., 10824.50000) circle (1pt);

\draw (axis cs:207.5,10557.00000) -- (axis cs:207.5,21000) -- (axis cs:211.5,21000) node[right] {\tiny $x_{207},x_{208}$};
\draw (axis cs:453.5,10364) -- (axis cs:453.5,2400) -- (axis cs:445.5,2400) node[left] {\tiny $x_{453},x_{454}$};
\draw (axis cs:449.5,10137.5) -- (axis cs:449.5,6000) -- (axis cs:445.5,6000) node[left] {\tiny $x_{449},x_{450}$};
\draw (axis cs:457.5,10592.5) -- (axis cs:457.5,7224) -- (axis cs:467.5,7224) node[right] {\tiny $x_{457},x_{458}$};
\draw (axis cs:463.5,10824) -- (axis cs:467.5,10824) node[right] {\tiny $x_{463},x_{464}$};
\end{axis}
\end{tikzpicture}
\caption{\label{fig:205early}\small Early evolution in normal form orbit of order $205$.}
\end{figure}

In the above dynamics, the gap between the linear pairs $\left[x_{449},x_{450}\right]$ and $\left[x_{457},x_{458}\right]$ contains an obstacle $x_{207}$ with the property that the differences $x_{457}-x_{207}=\frac{343}{4}$ and $x_{207}-x_{450}=\frac{1473}{4}$ are both greater than the difference of the elements of the reproducing pairs, namely
$$1=\min\left\{2\Delta\mathcal{M}_i:i\in\{205,\ldots,n\}\right\}=:\mu_n$$
for every $n\in\{205,\ldots,461\}$. Randomness of obstacles makes it possible that the value of this quantity decreases at a future time step, hence bringing the median sequence closer to convergence. Let us explain the first decrease in the normal form orbit of order $205$.

The first irregular phase continues with the following recursive reproductions of linear pairs of difference $1$ (figure \ref{fig:205mid}):
\begin{center}
$\left[x_{457},x_{458}\right]\mapsto\left[x_{463},x_{464}\right]\mapsto\left[x_{469},x_{470}\right]\mapsto\left[x_{475},x_{476}\right] \mapsto\left[x_{481},x_{482}\right]\mapsto\left[x_{485},x_{486}\right]\mapsto\left[x_{493},x_{494}\right]\mapsto \left[x_{497},x_{498}\right]\mapsto\left[x_{505},x_{506}\right]\mapsto\left[x_{509},x_{510}\right]\mapsto\left[x_{517},x_{518}\right] \mapsto\left[x_{521},x_{522}\right]$.
\end{center}
However, it turns out that the pair $\left[x_{521},x_{522}\right]$ does not reproduce; it never becomes ready since $x_{521}<x_{228}<x_{522}$. When the median walks across the pair $\left[x_{521},x_{228}\right]$, which has a smaller difference $x_{228}-x_{521}=\frac{3}{4}$, the value of $\mu_n$ decreases, i.e., $\mu_{526}=\frac{3}{4}$, marking the end of the first irregular structure (see figures \ref{fig:205} and \ref{fig:205mu}).

This example shows that a sufficient condition for termination of the algorithm is \textit{a decrease in the minimum value of the difference of two consecutive medians}. Since this minimum value is initially $\frac{1}{2}$, we replace the condition $\mathcal{M}_{k-1}\neq\mathcal{M}_k$ in the algorithm with $\mathcal{M}_k-\mathcal{M}_{k-1}\geqslant\frac{1}{2}$. The new algorithm reads as follows.\bigskip

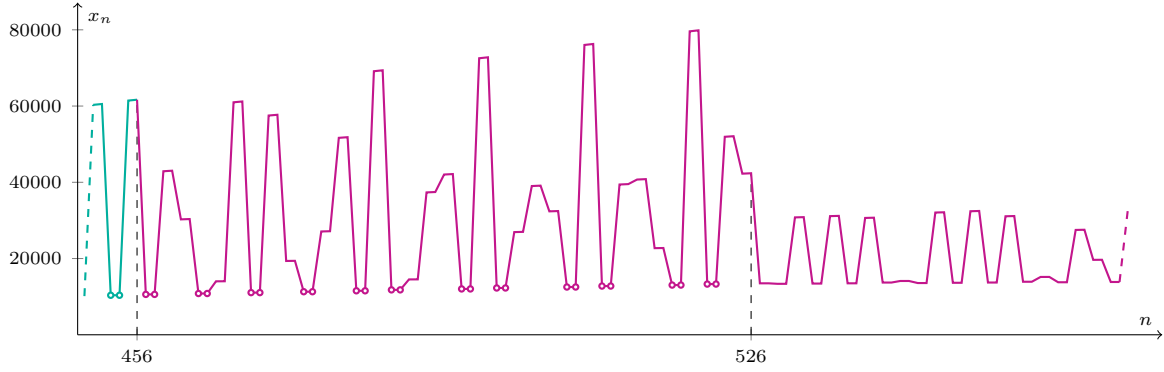
\begin{figure}
\centering
\begin{tikzpicture}
\begin{axis}[
	xmin=449.2272727,
	xmax=572.8636364,
	ymin=0,
	ymax=87120.27273,
    ytick={20000,40000,60000,80000},yticklabels={20000,40000,60000,80000},
    xtick={456,526},xticklabels={456,526},
	axis lines=middle,
	samples=100,
	xlabel=$n$,
	ylabel=$x_n$,
	width=16cm,
	height=6cm,
	clip=false,
	axis lines=middle,
    x axis line style=->,
    y axis line style=->,
    scaled y ticks=false
]
\draw[dashed] (axis cs:456,0)--(axis cs:456., 61664.750);
\draw[dashed] (axis cs:526,0)--(axis cs:526., 42371.37500);

\draw[thick,newlightblue,dashed] (axis cs:450., 10138.) -- (axis cs:451., 60312.750);

\draw[thick,color=newlightblue] plot coordinates {(axis cs:451., 60312.750) (axis cs:452., 60536.250) (axis cs:453., 10363.500) (axis cs:454., 10364.500) (axis cs:455., 61439.250) (axis cs:456., 61664.750)};

\draw[thick,color=newpurple] plot coordinates {(axis cs:456., 61664.750) (axis cs:457., 10592.) (axis cs:458., 10593.) (axis cs:459., 42896.12500) (axis cs:460., 43037.87500) (axis cs:461., 30271.62500) (axis cs:462., 30357.37500) (axis cs:463., 10823.50000) (axis cs:464., 10824.50000) (axis cs:465., 14022.37500) (axis cs:466., 14037.12500) (axis cs:467., 60985.37500) (axis cs:468., 61201.12500) (axis cs:469., 11058.) (axis cs:470., 11059.) (axis cs:471., 57512.37500) (axis cs:472., 57710.62500) (axis cs:473., 19359.37500) (axis cs:474., 19394.62500) (axis cs:475., 11295.50000) (axis cs:476., 11296.50000) (axis cs:477., 27098.12500) (axis cs:478., 27165.37500) (axis cs:479., 51661.62500) (axis cs:480., 51830.87500) (axis cs:481., 11536.) (axis cs:482., 11537.) (axis cs:483., 69135.75000) (axis cs:484., 69375.25000) (axis cs:485., 11778.50000) (axis cs:486., 11779.50000) (axis cs:487., 14519.87500) (axis cs:488., 14532.12500) (axis cs:489., 37344.) (axis cs:490., 37449.50000) (axis cs:491., 42035.37500) (axis cs:492., 42159.12500) (axis cs:493., 12025.) (axis cs:494., 12026.) (axis cs:495., 72540.75000) (axis cs:496., 72786.25000) (axis cs:497., 12273.50000) (axis cs:498., 12274.50000) (axis cs:499., 26933.62500) (axis cs:500., 26993.37500) (axis cs:501., 39014.50000) (axis cs:502., 39122.) (axis cs:503., 32376.12500) (axis cs:504., 32456.37500) (axis cs:505., 12526.) (axis cs:506., 12527.) (axis cs:507., 76029.75000) (axis cs:508., 76281.25000) (axis cs:509., 12780.50000) (axis cs:510., 12781.50000) (axis cs:511., 39418.37500) (axis cs:512., 39523.62500) (axis cs:513., 40719.) (axis cs:514., 40828.50000) (axis cs:515., 22719.87500) (axis cs:516., 22758.62500) (axis cs:517., 13039.) (axis cs:518., 13040.) (axis cs:519., 79602.75000) (axis cs:520., 79860.25000) (axis cs:521., 13299.50000) (axis cs:522., 13300.50000) (axis cs:523., 51938.12500) (axis cs:524., 52086.87500) (axis cs:525., 42260.62500) (axis cs:526., 42371.37500) (axis cs:527., 13497.12500) (axis cs:528., 13497.87500) (axis cs:529., 13366.37500) (axis cs:530., 13366.62500) (axis cs:531., 30790.31250) (axis cs:532., 30856.18750) (axis cs:533., 13433.) (axis cs:534., 13433.25000) (axis cs:535., 31121.93750) (axis cs:536., 31188.31250) (axis cs:537., 13500.12500) (axis cs:538., 13500.37500) (axis cs:539., 30647.56250) (axis cs:540., 30711.43750) (axis cs:541., 13700.) (axis cs:542., 13700.75000) (axis cs:543., 14108.75000) (axis cs:544., 14111.) (axis cs:545., 13568.25000) (axis cs:546., 13568.50000) (axis cs:547., 32064.18750) (axis cs:548., 32132.06250) (axis cs:549., 13636.87500) (axis cs:550., 13637.12500) (axis cs:551., 32405.81250) (axis cs:552., 32474.18750) (axis cs:553., 13706.) (axis cs:554., 13706.25000) (axis cs:555., 31084.93750) (axis cs:556., 31147.81250) (axis cs:557., 13908.87500) (axis cs:558., 13909.62500) (axis cs:559., 15168.12500) (axis cs:560., 15173.37500) (axis cs:561., 13776.12500) (axis cs:562., 13776.37500) (axis cs:563., 27499.75000) (axis cs:564., 27548.75000) (axis cs:565., 19652.43750) (axis cs:566., 19673.31250) (axis cs:567., 13847.) (axis cs:568., 13847.25000)};

\draw[thick,dashed,newpurple] (axis cs:568., 13847.25000) -- (axis cs:569., 33869.18750);

\draw[thick,newlightblue,fill=white] (axis cs:453., 10363.500) circle (1pt);
\draw[thick,newlightblue,fill=white] (axis cs:454., 10364.500) circle (1pt);

\draw[thick,newpurple,fill=white] (axis cs:457., 10592.) circle (1pt);
\draw[thick,newpurple,fill=white] (axis cs:458., 10593.) circle (1pt);

\draw[thick,newpurple,fill=white] (axis cs:463., 10823.50000) circle (1pt);
\draw[thick,newpurple,fill=white] (axis cs:464., 10824.50000) circle (1pt);

\draw[thick,newpurple,fill=white] (axis cs:469., 11058.) circle (1pt);
\draw[thick,newpurple,fill=white] (axis cs:470., 11059.) circle (1pt);

\draw[thick,newpurple,fill=white] (axis cs:475., 11295.50000) circle (1pt);
\draw[thick,newpurple,fill=white] (axis cs:476., 11296.50000) circle (1pt);

\draw[thick,newpurple,fill=white] (axis cs:481., 11536.) circle (1pt);
\draw[thick,newpurple,fill=white] (axis cs:482., 11537.) circle (1pt);

\draw[thick,newpurple,fill=white] (axis cs:485., 11778.50000) circle (1pt);
\draw[thick,newpurple,fill=white] (axis cs:486., 11779.50000) circle (1pt);

\draw[thick,newpurple,fill=white] (axis cs:493., 12025.) circle (1pt);
\draw[thick,newpurple,fill=white] (axis cs:494., 12026.) circle (1pt);

\draw[thick,newpurple,fill=white] (axis cs:497., 12273.50000) circle (1pt);
\draw[thick,newpurple,fill=white] (axis cs:498., 12274.50000) circle (1pt);

\draw[thick,newpurple,fill=white] (axis cs:505., 12526.) circle (1pt);
\draw[thick,newpurple,fill=white] (axis cs:506., 12527.) circle (1pt);

\draw[thick,newpurple,fill=white] (axis cs:509., 12780.50000) circle (1pt);
\draw[thick,newpurple,fill=white] (axis cs:510., 12781.50000) circle (1pt);

\draw[thick,newpurple,fill=white] (axis cs:517., 13039.) circle (1pt);
\draw[thick,newpurple,fill=white] (axis cs:518., 13040.) circle (1pt);

\draw[thick,newpurple,fill=white] (axis cs:521., 13299.50000) circle (1pt);
\draw[thick,newpurple,fill=white] (axis cs:522., 13300.50000) circle (1pt);
\end{axis}
\end{tikzpicture}
\caption{\label{fig:205mid}\small The first irregular phase of the normal form orbit of order $205$.}
\end{figure}

%%%%%%%%%%%%%%%%%%%%%%%%%%%%%%%%%%%%%%%%%%%%%%%%%%%%%%%% FIGURE
\begin{figure}
\centering
\input{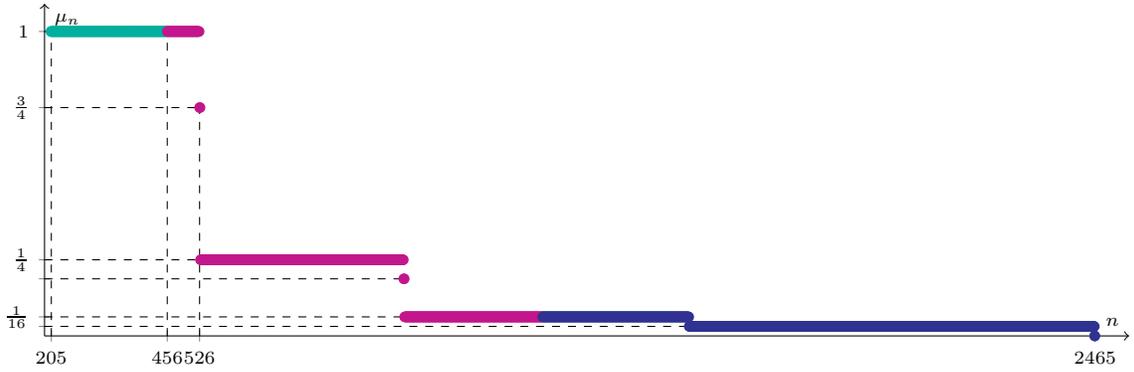}
\caption{\label{fig:205mu}\small The sequence $\left(\mu_n\right)_{n=205}^{2465}$.}
\end{figure}
%%%%%%%%%%%%%%%%%%%%%%%%%%%%%%%%%%%%%%%%%%%%%%%%%%%%%%%%

\begin{algorithmb}[Recursive reproductions of pairs]\textcolor{white}{a}
\begin{itemize}[leftmargin=2cm,itemsep=0pt]
\item[{\sc input:}] A set $\xi_{n_0}$ with R-subset $\PP_0=[0,1]$, where $n_{0}\geqslant 3$ is odd and $x_{n_0+1}\geqslant\max\left(\PP_0\right)$.
\item[{\sc output:}] Sequences $\left(n_0,\ldots,n_{N-1}\right)$ and
$\left(\PP_1,\ldots,\PP_N\right)=\left(\left[x_{n_i+2},x_{n_i+3}\right]\right)_{i=0}^{N-1}$,
where
\begin{enumerate}
\item[i)] For every $i\in\{0,\ldots,N-1\}$, $\PP_i\subseteq \xi_{n_i}$ is ready and generates
$\PP_{i+1}$ according to part ii) of corollary \ref{cor:firstdifferenceaffine}.
\item[ii)] $N$ is the largest positive integer $i$ such that $\PP_i$ exists and $\min\{2\Delta\mathcal{M}_n:n_0+1\leqslant n\leqslant n_{i-1}\}=1$.
\end{enumerate}
\end{itemize}
\begin{enumerate}[font=\scriptsize]
\item[\textnormal{1}\,\,\,\,] $\textnormal{go-on}:=\textit{true}$
\item[\textnormal{2}\,\,\,\,] $i:=0$
\item[\textnormal{3}\,\,\,\,] \textnormal{\textbf{while $\textnormal{go-on}$ do}}
\item[\textnormal{4}\,\,\,\,] \hspace*{0.4cm} $x_{n_i+2}:=i+1+\frac{1}{2}\sum_{\ell=0}^i n_\ell$
\item[\textnormal{5}\,\,\,\,] \hspace*{0.4cm} $x_{n_i+3}:=x_{n_i+2}+1$
\item[\textnormal{6}\,\,\,\,] \hspace*{0.4cm} $\PP_{i+1}:=\left[x_{n_i+2},x_{n_i+3}\right]$
\item[\textnormal{7}\,\,\,\,] \hspace*{0.4cm} $k:=n_{i}+2$
\item[\textnormal{8}\,\,\,\,] \hspace*{0.4cm} $\textnormal{go-on}:=\textit{false}$
\item[\textnormal{9}\,\,\,\,] \hspace*{0.4cm} \textnormal{\textbf{while $\Delta\mathcal{M}_{k}\geqslant\frac{1}{2}$ and $\mathcal{M}_k\leqslant x_{n_i+2}$ do}}
\item[\textnormal{10}\,\,\,\,] \hspace*{0.4cm}\hspace*{0.4cm} \textnormal{\textbf{if $\PP_{i+1}\subseteq\xi_k$\textnormal{ is ready} and $x_{k+1}\geqslant\max\left(\PP_{i+1}\right)$ then}}
\item[\textnormal{11}\,\,\,\,] \hspace*{0.4cm}\hspace*{0.4cm} \hspace*{0.4cm} $n_{i+1}:=k$
\item[\textnormal{12}\,\,\,\,] \hspace*{0.4cm}\hspace*{0.4cm} \hspace*{0.4cm} $\textnormal{go-on}:=\textit{true}$
\item[\textnormal{13}\,\,\,\,] \hspace*{0.4cm}\hspace*{0.4cm} \textnormal{\textbf{end if}}
\item[\textnormal{14}\,\,\,\,] \hspace*{0.4cm}\hspace*{0.4cm} $k:=k+2$
\item[\textnormal{15}\,\,\,\,] \hspace*{0.4cm} \textnormal{\textbf{end do}}
\item[\textnormal{16}\,\,\,\,] \hspace*{0.4cm} $i:=i+1$
\item[\textnormal{17}\,\,\,\,] \textnormal{\textbf{end do}}
\item[\textnormal{18}\,\,\,\,] \textnormal{\textbf{return} $\left(\left(n_0,\ldots,n_{N-1}\right),\left(\PP_1,\ldots,\PP_N\right)\right)$}
\end{enumerate}
\end{algorithmb}\bigskip

\noindent Clearly, we have the following.\bigskip

\begin{proposition}
Algorithm B is correct and terminates if there exists $k\geqslant n_0+1$ such that $\Delta\mathcal{M}_k<\frac{1}{2}$.
\end{proposition}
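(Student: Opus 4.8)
The plan is to mirror, almost word for word, the proof of the proposition asserting the correctness of Algorithm~A, with the evident substitutions: $\PP$ for $\AP$ throughout, part~ii) of corollary~\ref{cor:firstdifferenceaffine} in place of part~i), and the remark that property~iii) in the definition of \emph{ready} is automatic for a two-element set. For correctness I would take as loop invariant for the outer while-loop (lines~3--17) the boolean expression $[i=0]\lor\left[\left(\PP_{i-1}\subseteq\xi_{n_{i-1}}\text{ is ready}\right)\land\left(\PP_{i-1}\text{ generates }\PP_i\text{ by part~ii) of corollary~\ref{cor:firstdifferenceaffine}}\right)\right]$, and verify it by the same two-case analysis as before (one execution starting with $i=0$, one starting with $i=i'\geqslant1$). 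The only thing to compute is the induction showing that line~4 produces exactly the first term $x_{n_i+2}=i+1+\tfrac12\sum_{\ell=0}^i n_\ell$ prescribed by part~ii) of the corollary applied to $\PP_i$, whose modulus is always~$1$: this is immediate from $x_{n_i+2}=\tfrac12 n_i+\max(\PP_i)$ together with the inductive value $\max(\PP_i)=\min(\PP_i)+1=i+1+\tfrac12\sum_{\ell=0}^{i-1}n_\ell$. Since property~iii) is vacuous for pairs, this argument is in fact slightly shorter than its counterpart for Algorithm~A.

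For termination I would argue as follows. Because $\PP_{i+1}$ is declared ready at $n_{i+1}=k$ for some $k$ in the progression $n_i+2,n_i+4,\dots$, one has $n_{i+1}\geqslant n_i+2$, so $(n_i)$ is strictly increasing; in particular, were the outer loop never to stop, $n_i\to\infty$. In such an infinite run, for every $i$ the body of the inner while-loop (lines~10--14) must be executed at $k=n_{i+1}$ for $n_{i+1}$ to be assigned there; hence the inner-loop guard holds at every $k$ visited with $n_i+2\leqslant k\leqslant n_{i+1}$, and in particular $\Delta\mathcal{M}_k\geqslant\tfrac12$ for all such~$k$. These indices are all odd (being obtained by repeatedly adding~$2$ to the odd number~$n_0$), and since iteration $i+1$ resumes at $n_{i+1}+2$, the blocks $\{n_i+2,\dots,n_{i+1}\}$ chain together, so that (using $n_i\to\infty$) $\Delta\mathcal{M}_k\geqslant\tfrac12$ for \emph{every} odd $k\geqslant n_0+2$. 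On the other hand, a short computation from \eqref{eq:MMMmedian} and the monotonicity of $(\mathcal{M}_n)$ shows that, for even $n\geqslant n_0+1$, one has $\mathcal{M}_{n-1}\leqslant\mathcal{M}_{n+1}$ and $\mathcal{M}_n=\langle\mathcal{M}_{n-1},\mathcal{M}_{n+1}\rangle$, hence $\Delta\mathcal{M}_n=\Delta\mathcal{M}_{n+1}=\tfrac12(\mathcal{M}_{n+1}-\mathcal{M}_{n-1})$; therefore any $k^{\ast}\geqslant n_0+1$ with $\Delta\mathcal{M}_{k^{\ast}}<\tfrac12$ — necessarily $k^{\ast}\geqslant n_0+2$, since $\Delta\mathcal{M}_{n_0+1}=\tfrac12$ — produces an \emph{odd} index $\geqslant n_0+2$ at which $\Delta\mathcal{M}<\tfrac12$, contradicting the previous sentence. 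So the existence of such a $k^{\ast}$ forces the outer loop to stop, after which, exactly as for Algorithm~A, the integer $N$ of the {\sc output} exists and the returned sequences are correct.

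The step I expect to require the most care — the one real departure from the Algorithm~A proof — is the justification that in an infinite run the inner-loop guard holds at \emph{all} of $n_i+2,\dots,n_{i+1}$. One must exclude the scenario in which $\Delta\mathcal{M}_k\geqslant\tfrac12$ first fails at some odd $k_0$ \emph{after} $n_{i+1}$ has already been assigned at a smaller~$k$ (so that go-on is \textit{true} and the run continues). I would rule this out by noting that in that situation every later $n_j$ stays strictly increasing yet bounded above by $k_0-2$, because no inner loop can execute its body at an index where the guard fails; an infinite strictly increasing bounded sequence of integers being impossible, go-on must turn \textit{false} within finitely many further iterations. The remaining pieces — the parity/consecutive-equal-differences identity (a one-line consequence of \eqref{eq:MMMmedian} and monotonicity, as indicated above) and the wrap-up deducing the stated {\sc output} from the loop invariant — are routine, so the whole statement is indeed, as claimed, a transcription of the earlier proof with the stabilisation criterion replaced by the decrease-of-$\Delta\mathcal{M}$ criterion.
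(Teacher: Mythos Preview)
The paper offers no proof of this proposition beyond the word ``Clearly'', treating it as an immediate transcription of the argument for Algorithm~A. Your proposal supplies the details the paper omits, and the correctness half (the loop-invariant argument) is a faithful and correct adaptation.

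Your termination argument goes further than anything in the paper and is correct, including the parity claim you flag. The identity $\mathcal{M}_n=\langle\mathcal{M}_{n-1},\mathcal{M}_{n+1}\rangle$ for even $n\geqslant n_0+1$ does follow from \eqref{eq:MMMmedian} and monotonicity: since $x_n\geqslant\mathcal{M}_{n-1}$, the lower central element of $\xi_n$ is $\mathcal{M}_{n-1}$; writing $d$ for the upper central element one has $\mathcal{M}_n=\langle\mathcal{M}_{n-1},d\rangle$, and then $x_{n+1}=(n+1)\mathcal{M}_n-n\mathcal{M}_{n-1}\geqslant d$ (equivalent to $(n-1)(d-\mathcal{M}_{n-1})\geqslant 0$), so $\mathcal{M}_{n+1}=d$. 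This is the ``short computation'' you promised, and it closes the even-$k$ case cleanly.

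One minor remark: the scenario you worry about in your final paragraph does not actually arise. Once $\PP_{i+1}$ is declared ready at $k=n_{i+1}$ we have $\mathcal{M}_{n_{i+1}}=x_{n_i+2}$, and at the next inner-loop step $k=n_{i+1}+2$ the guard necessarily fails via the second conjunct $\mathcal{M}_k\leqslant x_{n_i+2}$ (either $\mathcal{M}_{n_{i+1}+2}>x_{n_i+2}$, or equality forces $\Delta\mathcal{M}_{n_{i+1}+2}=0$). Hence the inner loop of iteration $i$ visits exactly $n_i+2,\ldots,n_{i+1}$ before exiting, and iteration $i+1$ resumes at $n_{i+1}+2$; the blocks chain with no gap and your bounded-sequence workaround is unnecessary. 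This does not affect the validity of your argument, only its length.
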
\bigskip

%==============================================================================================================

\section{Proof of main theorem}\label{section:Proof}

\noindent Let us first sketch the idea of the proof.

Choose a quasi-regular phase; for instance, that in which there are 
exactly four reproducing arithmetic progressions $\AP_0=[0,1,2]$, $\AP_1$, $\AP_2$, $\AP_3$ and the gap between every two consecutive progressions is empty. 
Using algorithm A, we then construct an initial set whose orbit begins with 
this phase, by identifying sufficient conditions on 
its non-participating elements ---in the form of inequalities--- 
which guarantee the absence of obstacles until the median sequence reaches the largest element of $\AP_2$ (therefore completing the production of $\AP_3$). 
Moreover, by positioning a non-central element of the initial set at this median, we can also force the orbit to stabilise immediately after the map finishes generating the specified structure.

The above idea can also be applied, using algorithm B, if we specify any 
form of irregular or regular phase. 
\bigskip

\begin{proofoftheoremI}
Fix $k\geqslant 0$, and fix $N\geqslant 2$. We will construct an initial set $\xi_{n_0}$ whose orbit consists of $N\geqslant 2$ reproducing pairs and stabilises at the larger element of the second-to-last pair, where the gap between every two consecutive pairs which the median walks across before stabilisation is partitioned uniformly by exactly $k$ equally-spaced obstacles which are elements of $\xi_{n_0}$.

Let us first determine the minimum value of $n_0$ in terms of $N$ and $k$. Clearly, the largest $\frac{n_0+1}{2}$ elements of $\xi_{n_0}$ must contain the first pair, $k(N-1)$ obstacles, and possibly other non-participating elements, so we must have
$$\frac{n_0+1}{2}\geqslant k(N-1)+2,$$
i.e.,
\begin{equation}\label{eq:secondconstruction}
n_0\geqslant 2k(N-1)+3\geqslant 3.
\end{equation}
The first two pairs in the orbit are $\PP_0=\left[x_{n_{-1}+2},x_{n_{-1}+3}\right]=[0,1]$ and $\PP_1=\left[x_{n_0+2},x_{n_0+3}\right]$. Since $1=\mathcal{M}_{n_0+2}$ and there are exactly $k$ obstacles between these two pairs, then $x_{n_0+2}=\mathcal{M}_{n_0+2k+4}$, which means the pair $\PP_1$ is ready at time $n_1=n_0+2k+4$. Continuing this, we have that
$$
n_i = n_{i-1}+2k+4\qquad\text{for every }i\in\{1,\ldots,N-1\}.
$$
Solving this recursion gives
$$n_i = n_{0}+(2k+4)i\qquad\text{for every }i\in\{1,\ldots,N-1\}.$$
By algorithm B, the $N$ generated pairs are
\begin{eqnarray}\label{eq:pairsthmA}
\left(\PP_1,\ldots,\PP_N\right)&=&\left(\left[x_{n_i+2},x_{n_i+3}\right]\right)_{i=0}^{N-1}\nonumber\\
 &=&\left(\left[\frac{i+1}{2}n_0+\frac{k+2}{2}i^2+\frac{k+4}{2}i+1,\frac{i+1}{2}n_0+\frac{k+2}{2}i^2+\frac{k+4}{2}i+2\right]\right)_{i=0}^{N-1}.
\end{eqnarray}

Notice that the obstacles form $N-1$ arithmetic progressions, each of length $k$, which, together with the endpoints of the respective gaps form arithmetic progressions of length $k+2$. For every $i\in\{0,\ldots,N-2\}$, the $i$-th arithmetic progression $\left[\max\left(\PP_i\right),\ldots,\min\left(\PP_{i+1}\right)\right]$ is ready at time $n_{i}+2$ and, since $x_{n_{i}+3}> \min\left(\PP_{i+1}\right)=x_{n_i+2}$, generates the intermediate iterates $x_{n_i+4},\ldots,x_{n_{i+1}+1}$ with $x_{n_i+4}<\cdots< x_{n_{i+1}+1}$ by part i) of corollary \ref{cor:firstdifferenceaffine}. Now, for every $i\in\{1,\ldots,N-2\}$ we have
\begin{eqnarray*}
x_{n_i+4}-x_{n_i+1}&=&\left[\left(n_i+4\right)\mathcal{M}_{n_i+3}-\left(n_i+3\right)\mathcal{M}_{n_i+2}\right]-\left[\left(n_i+1\right)\mathcal{M}_{n_i}-n_i\mathcal{M}_{n_i-1}\right]\\
                   &=&\left(n_i+4\right)\left\langle x_{n_{i-1}+3},x_{n_{i-1}+3}+\frac{1}{k+1}\left(x_{n_i+2}-x_{n_{i-1}+3}\right)\right\rangle-\left(n_i+3\right)x_{n_{i-1}+3}\\
                   &&-\left[\left(n_i+1\right)x_{n_{i-1}+2}-n_i\left\langle x_{n_{i-2}+3}+\frac{k}{k+1}\left(x_{n_{i-1}+2}-x_{n_{i-2}+3}\right),x_{n_{i-1}+2}\right\rangle\right]\\
                   &=&\frac{\left(-n_i+2k-2\right)x_{n_{i-1}+3}-\left(n_i+4\right)x_{n_i+2}+n_ix_{n_{i-2}+3}-\left(n_i+2k+2\right)x_{n_{i-1}+2}}{2k+2}\\
                   &=&\frac{{n_0}^2+\left[(2i+4)k+4i+10\right]n_0+\left[4ik^2+(24i+4)k+32i+4\right]}{4k+4}\,\,\,>\,\,\,0.
\end{eqnarray*}

\sloppy Therefore, to guarantee that intermediate iterates are all greater than or equal to $\max\left(\PP_{N-1}\right)=x_{n_{N-2}+3}$, it suffices to impose the condition $$x_{n_0+4}\geqslant\max\left(\PP_{N-1}\right).$$
Using \eqref{eq:pairsthmA} and the fact that
\begin{eqnarray*}
x_{n_0+4}&=&\left(n_0+4\right)\mathcal{M}_{n_0+3}-\left(n_0+3\right)\mathcal{M}_{n_0+2}\\
&=&\left(n_0+4\right)\left\langle 1,1+\frac{1}{k+1}\left(x_{n_0+2}-1\right)\right\rangle-\left(n_0+3\right)\cdot1\\
&=&\frac{{n_0}^2}{4k+4}+\frac{n_0}{k+1}+1,
\end{eqnarray*}
the condition above is equivalent to
$$\frac{{n_0}^2}{4k+4}+\frac{n_0}{k+1}+1\geqslant\frac{N-1}{2}n_0+\frac{k+2}{2}(N-2)^2+\frac{k+4}{2}(N-2)+2,$$
namely,
\begin{equation}\label{eq:polynomialthmA}
\frac{{n_0}^2}{4k+4}+\frac{(1-N)k-N+3}{2k+2}n_0-\frac{k+2}{2}N^2+\frac{3k+4}{2}N-k-1\geqslant0,
\end{equation}
or, equivalently,
\begin{equation}\label{eq:secondconstruction2}
n_0\geqslant \mathcal{N}(k,N),
\end{equation}
where
\begin{eqnarray*}
\mathcal{N}(k,N)&:=&(k+1)N-k-3\\
&&+\,\,\sqrt{\left(3k^2+8k+5\right)N^2+\left(-8k^2-22k-14\right)N+\left(5k^2+14k+13\right)}
\end{eqnarray*}
is the larger root of the quadratic polynomial in $n_0$ on the left hand side of \eqref{eq:polynomialthmA}. Using \eqref{eq:secondconstruction} and \eqref{eq:secondconstruction2}, we can now assign to $n_0$ its minimum value, namely,
$$n_0=\min\left\{n\geqslant 3\text{ odd}:n\geqslant 2k(N-1)+3\text{ and }n\geqslant \mathcal{N}(k,N)\right\}.$$
Since the second condition is stronger, we have 
\begin{equation}\label{eq:n0thmA}
n_0=
\begin{cases}
\left\lceil \mathcal{N}(k,N)\right\rceil&\text{if }\left\lceil \mathcal{N}(k,N)\right\rceil\text{ is odd}\\
\left\lceil \mathcal{N}(k,N)\right\rceil+1&\text{otherwise.}
\end{cases}
\end{equation}

Therefore, if we let
\begin{equation}\label{eq:setthmA}
\xi_{n_0}=\left[\ell_1,\ldots,\ell_{\frac{n_0-1}{2}},0,1,o_1,\ldots,o_{k(N-1)},u_1,\ldots,u_{\frac{n_0-2k(N-1)-3}{2}}\right],
\end{equation}
where $o_1,\ldots,o_{k(N-1)}$ are the specified obstacles and $\ell_1,\ldots,\ell_{\frac{n_0-1}{2}},u_1,\ldots,u_{\frac{n_0-2k(N-1)-3}{2}}$ are chosen to satisfy
$$\ell_1,\ldots,\ell_{\frac{n_0-1}{2}},u_1,\ldots,u_{\frac{n_0-2k(N-1)-3}{2}}\notin \left(0,\max\left(\PP_{N-1}\right)\right)$$
and
$$x_{n_0+1}=\max\left(\PP_{N-1}\right),\quad\text{i.e.,}\quad -\mathcal{S}\left(\xi_{n_0}\right)= \frac{N-1}{2}n_0+\frac{k+2}{2}(N-2)^2+\frac{k+4}{2}(N-2)+2,$$
then we have $\mathcal{M}_{n_{N-1}+2}=\mathcal{M}_{n_{N-1}+3}$ as the first two equal consecutive medians, implying that
\begin{eqnarray*}
\tau\left(\xi_{n_0}\right)&=&n_{N-1}+4\\
&=&n_0+(2k+4)(N-1)+4\\
&\sim& \left(1+\frac{2k+4}{k+1+\sqrt{3k^2+8k+5}}\right)n_0\\
&=&\sqrt{\frac{3k+5}{k+1}}n_0,
\end{eqnarray*}
proving the theorem.
\end{proofoftheoremI}\bigskip

Notice that the case $k=0$ of this theorem represents the regular phase of a normal form orbit, whose length is asymptotic to $\sqrt{5}$ times its order. As $k\to\infty$, this coefficient decreases and approaches $\sqrt{3}$.

As an example, let us write down an initial set prescribed by the above proof whose orbit has $k=1$ obstacle in each gap (so that the transit time of the set is asymptotic to twice its size) and $N=5$ generated pairs. First, by \eqref{eq:n0thmA}, we obtain $n_0=21$. Next, by \eqref{eq:pairsthmA}, the reproducing pairs are
$$\left(\PP_0,\PP_1,\PP_2,\PP_3,\PP_4,\PP_5\right)=\left(\left[0,1\right],\left[\frac{23}{2},\frac{25}{2}\right],\left[26,27\right],\left[\frac{87}{2},\frac{89}{2}\right],\left[64,65\right],\left[\frac{175}{2},\frac{177}{2}\right]\right),$$
so that the obstacles are
$$\left(o_1,o_2,o_3,o_4\right)=\left(\left\langle1,\frac{23}{2}\right\rangle,\left\langle\frac{25}{2},26\right\rangle,\left\langle27,\frac{87}{2}\right\rangle,\left\langle\frac{89}{2},64\right\rangle\right)=\left(\frac{25}{4},\frac{77}{4},\frac{141}{4},\frac{217}{4}\right).$$
Therefore, by \eqref{eq:setthmA}, our initial set is
$$\xi_{21}=\left[\ell_1,\ldots,\ell_{10},0,1,\frac{25}{4},\frac{77}{4},\frac{141}{4},\frac{217}{4},u_1,\ldots,u_5\right],$$
where $\ell_1,\ldots,\ell_{10},u_1,\ldots,u_5$ must be chosen to satisfy the conditions
$$\ell_1,\ldots,\ell_{10},u_1,\ldots,u_5\notin(0,65)\qquad\text{and}\qquad-\mathcal{S}\left(\xi_{21}\right)=65.$$
Let us choose $u_1=\cdots=u_5=65$ and $\ell_2=\cdots=\ell_{10}=0$, so that the second condition uniquely determines the value of
$$\ell_1=-65-\left(9\cdot0+0+1+\frac{25}{4}+\frac{77}{4}+\frac{141}{4}+\frac{217}{4}+5\cdot65\right)=-506$$
which satisfies the first condition, giving
\begin{equation}\label{eq:examplethmA}
\xi_{21}=\left[-506,\underbrace{0,\ldots,0}_{9},0,1,\frac{25}{4},\frac{77}{4},\frac{141}{4},\frac{217}{4},\underbrace{65,\ldots,65}_5\right].
\end{equation}
The transit time of this set is $49$ and its orbit is shown in figure \ref{fig:pair}.

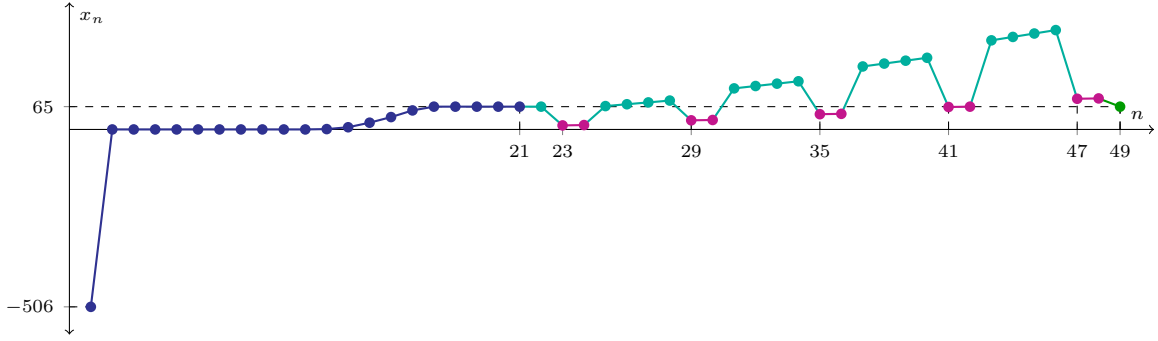
\begin{figure}
\centering
\begin{tikzpicture}
\begin{axis}[
	xmin=0,
	xmax=50.58064516,
	ymin=-584.9375000,
	ymax=362.3125000,
    xtick={21,23,29,35,41,47,49},
    ytick={-506,65},
	axis lines=middle,
    x axis line style=->,
    y axis line style=<->,
	samples=100,
	xlabel=$n$,
	ylabel=$x_n$,
	width=16cm,
    height=6cm
]

\draw[dashed] (axis cs:49,0) -- (axis cs:49,65);
\draw[dashed] (axis cs:0,65) -- (axis cs:49,65);

\draw[dashed] (axis cs:0,-506) -- (axis cs:1,-506);

\draw[dashed] (axis cs:21,0) -- (axis cs:21,65);
\draw[dashed] (axis cs:23,0) -- (axis cs:23,11.5);
\draw[dashed] (axis cs:29,0) -- (axis cs:29,26);
\draw[dashed] (axis cs:35,0) -- (axis cs:35,43.5);
\draw[dashed] (axis cs:41,0) -- (axis cs:41,64);
\draw[dashed] (axis cs:47,0) -- (axis cs:47,87.5);
\draw[dashed] (axis cs:49,0) -- (axis cs:49,65);

\draw[thick,color=newblue] plot coordinates {(axis cs:1., -506.) (axis cs:2., 0.) (axis cs:3., 0.) (axis cs:4., 0.) (axis cs:5., 0.) (axis cs:6., 0.) (axis cs:7., 0.) (axis cs:8., 0.) (axis cs:9., 0.) (axis cs:10., 0.) (axis cs:11., 0.) (axis cs:12., 1.) (axis cs:13., 6.2500000) (axis cs:14., 19.250000) (axis cs:15., 35.250000) (axis cs:16., 54.250000) (axis cs:17., 65.) (axis cs:18., 65.) (axis cs:19., 65.) (axis cs:20., 65.) (axis cs:21., 65.)};

\draw[thick,color=newlightblue] plot coordinates {(axis cs:21., 65.) (axis cs:22., 65.) (axis cs:23., 11.500000)};

\draw[thick,color=newpurple] plot coordinates {(axis cs:23., 11.500000) (axis cs:24., 12.500000)};

\draw[thick,color=newlightblue] plot coordinates {(axis cs:24., 12.500000) (axis cs:25., 66.625000) (axis cs:26., 71.875000) (axis cs:27., 77.125000) (axis cs:28., 82.375000) (axis cs:29., 26.)};

\draw[thick,color=newpurple] plot coordinates {(axis cs:29., 26.) (axis cs:30., 27.)};

\draw[thick,color=newlightblue] plot coordinates {(axis cs:30., 27.) (axis cs:31., 117.12500) (axis cs:32., 123.87500) (axis cs:33., 130.62500) (axis cs:34., 137.37500) (axis cs:35., 43.500000)};

\draw[thick,color=newpurple] plot coordinates {(axis cs:35., 43.500000) (axis cs:36., 44.500000)};

\draw[thick,color=newlightblue] plot coordinates {(axis cs:36., 44.500000) (axis cs:37., 179.62500) (axis cs:38., 187.87500) (axis cs:39., 196.12500) (axis cs:40., 204.37500) (axis cs:41., 64.)};

\draw[thick,color=newpurple] plot coordinates {(axis cs:41., 64.) (axis cs:42., 65.)};

\draw[thick,color=newlightblue] plot coordinates {(axis cs:42., 65.) (axis cs:43., 254.12500) (axis cs:44., 263.87500) (axis cs:45., 273.62500) (axis cs:46., 283.37500) (axis cs:47., 87.500000)};

\draw[thick,color=newpurple] plot coordinates {(axis cs:47., 87.500000) (axis cs:48., 88.500000)};

\draw[thick,color=newgreen] plot coordinates {(axis cs:48., 88.500000) (axis cs:49., 65.)};

\fill[newblue] (axis cs:1, -506) circle (2pt);
\fill[newblue] (axis cs:2, 0) circle (2pt);
\fill[newblue] (axis cs:3, 0) circle (2pt);
\fill[newblue] (axis cs:4, 0) circle (2pt);
\fill[newblue] (axis cs:5, 0) circle (2pt);
\fill[newblue] (axis cs:6, 0)  circle (2pt);
\fill[newblue] (axis cs:7, 0) circle (2pt);
\fill[newblue] (axis cs:8, 0) circle (2pt);
\fill[newblue] (axis cs:9, 0) circle (2pt);
\fill[newblue] (axis cs:10, 0) circle (2pt);
\fill[newblue] (axis cs:11, 0) circle (2pt);
\fill[newblue] (axis cs:12, 1) circle (2pt);
\fill[newblue] (axis cs:13, 6.25) circle (2pt);
\fill[newblue] (axis cs:14, 19.25) circle (2pt);
\fill[newblue] (axis cs:15, 35.25) circle (2pt);
\fill[newblue] (axis cs:16, 54.25) circle (2pt);
\fill[newblue] (axis cs:17, 65) circle (2pt);
\fill[newblue] (axis cs:18, 65) circle (2pt);
\fill[newblue] (axis cs:19, 65) circle (2pt);
\fill[newblue] (axis cs:20, 65) circle (2pt);
\fill[newblue] (axis cs:21, 65) circle (2pt);

\fill[newlightblue] (axis cs:22,65) circle (2pt);

\fill[newpurple] (axis cs:23., 11.500000) circle (2pt);
\fill[newpurple] (axis cs:24., 12.500000) circle (2pt);

\fill[newlightblue] (axis cs:25., 66.625000) circle (2pt);
\fill[newlightblue] (axis cs:26., 71.875000) circle (2pt);
\fill[newlightblue] (axis cs:27., 77.125000) circle (2pt);
\fill[newlightblue] (axis cs:28., 82.375000) circle (2pt);

\fill[newpurple] (axis cs:29., 26.) circle (2pt);
\fill[newpurple] (axis cs:30., 27.) circle (2pt);

\fill[newlightblue] (axis cs:31., 117.12500) circle (2pt);
\fill[newlightblue] (axis cs:32., 123.87500) circle (2pt);
\fill[newlightblue] (axis cs:33., 130.62500) circle (2pt);
\fill[newlightblue] (axis cs:34., 137.37500) circle (2pt);

\fill[newpurple] (axis cs:35., 43.500000) circle (2pt);
\fill[newpurple] (axis cs:36., 44.500000) circle (2pt);

\fill[newlightblue] (axis cs:37., 179.62500) circle (2pt);
\fill[newlightblue] (axis cs:38., 187.87500) circle (2pt);
\fill[newlightblue] (axis cs:39., 196.12500) circle (2pt);
\fill[newlightblue] (axis cs:40., 204.37500) circle (2pt);

\fill[newpurple] (axis cs:41., 64.) circle (2pt);
\fill[newpurple] (axis cs:42., 65.) circle (2pt);

\fill[newlightblue] (axis cs:43., 254.12500) circle (2pt);
\fill[newlightblue] (axis cs:44., 263.87500) circle (2pt);
\fill[newlightblue] (axis cs:45., 273.62500) circle (2pt);
\fill[newlightblue] (axis cs:46., 283.37500) circle (2pt);
   
\fill[newpurple] (axis cs:47., 87.500000) circle (2pt);
\fill[newpurple] (axis cs:48., 88.500000) circle (2pt);

\fill[newgreen] (axis cs:49., 65.) circle (2pt);
\end{axis}
\end{tikzpicture}
\caption{\label{fig:pair}\small
The orbit of the input set \eqref{eq:examplethmA}. The input set is shown in dark blue, the generated pairs in purple, the intermediate iterates in light blue, and the iterate from which the orbit stabilises in green.}
\end{figure}

Finally, let us prove part ii) of the theorem using the same idea and algorithm A. Here we choose the simplest form of quasi-regular phase, i.e., that in which there are no obstacles.\bigskip

\begin{proofoftheoremII}
Fix $N\geqslant 2$. We will construct an initial set $\xi_{n_0}$ whose orbit consists of $N\geqslant 2$ reproducing arithmetic progressions and stabilises at the largest element of the second-to-last progression, where the gap between every two consecutive progressions is empty.

Let us first determine the minimum value of $n_0$ in terms of $N$. For every $i\in\{1,\ldots,N-1\}$, there is no obstacle between $\AP_{i-1}$ and $\AP_{i}$. Since $\max\left(\AP_{i-1}\right)=\mathcal{M}_{n_{i-1}+2\left|\AP_{i-1}\right|-2}$ and there is no obstacle between these two progressions, then $\min\left(\AP_{i}\right)=\mathcal{M}_{n_{i-1}+2\left|\AP_i\right|}$, which means the progression $\AP_{i}$ is ready at time 
$$n_i=n_{i-1}+2\left|\AP_{i-1}\right|.$$
Solving this recursion gives
$$n_i=n_0+2^{i+1}+4i-2\qquad\text{for every }i\in\{1,\ldots,N-1\}.$$
The intermediate iterates are
$$x_{n_0+1},\,\,\,x_{n_1},x_{n_1+1},\,\,\,x_{n_2},x_{n_2+1},\,\,\,\ldots,\,\,\,x_{n_{N-1}},x_{n_{N-1}+1}.$$
For every $i\in\{1,\ldots,N-1\}$, by part ii) of corollary \ref{cor:firstdifferenceaffine} we have $x_{n_i+1}>x_{n_i}$ because these two iterates are generated when the median walks across the pair $\left[\max\left(\AP_{i-1}\right),\min\left(\AP_i\right)\right]$, and by algorithm A we have
\begin{eqnarray}\label{eq:firstconstruction}
x_{n_i}&=&
\begin{cases}
n_1\left\langle 2,x_{n_0+2}\right\rangle-\left(n_1-1\right)\cdot 2&\text{if }i=1\\
n_i\left\langle x_{n_{i-2}+2\left|\AP_{i-2}\right|-1},x_{n_{i-1}+2}\right\rangle-\left(n_i-1\right)x_{n_{i-2}+2\left|\AP_{i-2}\right|-1}&\text{if }2\leqslant i\leqslant N-1
\end{cases}\nonumber\\
&=&\frac{{n_0}^2}{4}+\left(\frac{5}{2}i-\frac{5}{2}+2^{i-1}\right)n_0+(i-1)2^{i+1}+5i^2-11i+5.
\end{eqnarray}
Moreover, we have
$$x_{n_i}-x_{n_{i-1}}=\left(2^{i-2}+\frac{5}{2}\right)n_0+\left(i2^i+10i-16\right)>0\qquad\text{for every }i\in\{2,\ldots,N-1\}.$$

Therefore, to guarantee that intermediate iterates are all greater than or equal to
\begin{equation}\label{eq:limitthmB}
\max\left(\AP_{N-1}\right)=\frac{N-1}{2}n_0+2^N+N^2-3N+2,
\end{equation}
it suffices to impose the condition
$$x_{n_1}\geqslant\max\left(\AP_{N-1}\right),$$
which is equivalent to
\begin{equation}\label{eq:polynomialthmB}
\frac{{n_0}^2}{4}+\frac{3-N}{2}n_0-2^{N}-N^2+3N-3\geqslant0
\end{equation}
by \eqref{eq:firstconstruction} and \eqref{eq:limitthmB}. In other words,
$$n_0\geqslant \mathcal{N}(N),$$
where
$$\mathcal{N}(N):=N-3+\sqrt{2^{N+2}+5N^2-18N+21}$$
is the larger root of the quadratic polynomial in $n_0$ on the left hand side of \eqref{eq:polynomialthmB}. Therefore, we can now assign to $n_0$ its minimum value, namely,
\begin{equation}\label{eq:n0thmB}
n_0=\min\left\{n\geqslant 5\text{ odd}:n\geqslant \mathcal{N}(N)\right\}=\begin{cases}
\left\lceil \mathcal{N}(N)\right\rceil&\text{if }\left\lceil \mathcal{N}(N)\right\rceil\text{ is odd}\\
\left\lceil \mathcal{N}(N)\right\rceil+1&\text{otherwise.}
\end{cases}
\end{equation}

Therefore, if we let
\begin{equation}\label{eq:setthmB}
\xi_{n_0}=\left[\ell_1,\ldots,\ell_{\frac{n_0-1}{2}},0,1,2,u_1,\ldots,u_{\frac{n_0-5}{2}}\right],
\end{equation}
where $\ell_1,\ldots,\ell_{\frac{n_0-1}{2}},u_1,\ldots,u_{\frac{n_0-5}{2}}$ are chosen to satisfy
$$\ell_1,\ldots,\ell_{\frac{n_0-1}{2}},u_1,\ldots,u_{\frac{n_0-5}{2}}\notin \left(0,\max\left(\AP_{N-1}\right)\right)$$
and
$$x_{n_0+1}=\max\left(\AP_{N-1}\right),\quad\text{i.e.,}\quad -\mathcal{S}\left(\xi_{n_0}\right)= \frac{N-1}{2}n_0+2^N+N^2-3N+2,$$
then we have $\mathcal{M}_{n_{N-1}+\left|\AP_N\right|}=\mathcal{M}_{n_{N-1}+\left|\AP_N\right|+1}$ as the first two equal consecutive medians, implying that
\begin{eqnarray*}
\tau\left(\xi_{n_0}\right)&=&n_{N-1}+\left|\AP_N\right|+2\\
&=&n_0+2^{N+1}+4N-2\\
&\sim& n_0+\frac{\left(2^{\frac{N+2}{2}}\right)^2}{2}\\
&\sim& n_0+\frac{{n_0}^2}{2}\\
&\sim& \frac{{n_0}^2}{2},
\end{eqnarray*}
proving the theorem.
\end{proofoftheoremII}\bigskip

For example, an initial set prescribed by the above proof whose orbit contains $N=4$ generated arithmetic progressions has size $n_0=11$, by \eqref{eq:n0thmB}. By \eqref{eq:limitthmB}, we obtain $\max\left(\AP_3\right)=\frac{77}{2}$. Therefore, by \eqref{eq:setthmA}, such an initial set is of the form
$$\xi_{11}=\left[\ell_1,\ell_2,\ell_3,\ell_4,\ell_{5},0,1,2,u_1,u_2,u_3\right],$$
where $\ell_1,\ell_2,\ell_3,\ell_4,\ell_{5},u_1,u_2,u_3$ must be chosen to satisfy the conditions
$$\ell_1,\ell_2,\ell_3,\ell_4,\ell_{5},u_1,u_2,u_3\notin\left(0,\frac{77}{2}\right)\qquad\text{and}\qquad-\mathcal{S}\left(\xi_{11}\right)=\frac{77}{2}.$$
Choosing $u_1=u_2=u_3=\frac{77}{2}$ and $\ell_2=\ell_3=\ell_4=\ell_{5}=0$, we obtain from the second condition that
$$\ell_1=-\frac{77}{2}-\left(4\cdot0+0+1+2+3\cdot\frac{77}{2}\right)=-157,$$
which satisfies the first condition, and hence
\begin{equation}\label{eq:examplethmB}
\xi_{11}=\left[-157, 0, 0, 0, 0, 0, 1, 2, \frac{77}{2}, \frac{77}{2}, \frac{77}{2}\right].
\end{equation}
The transit time of this set is $57$ and its orbit is shown in figure \ref{fig:AP}.

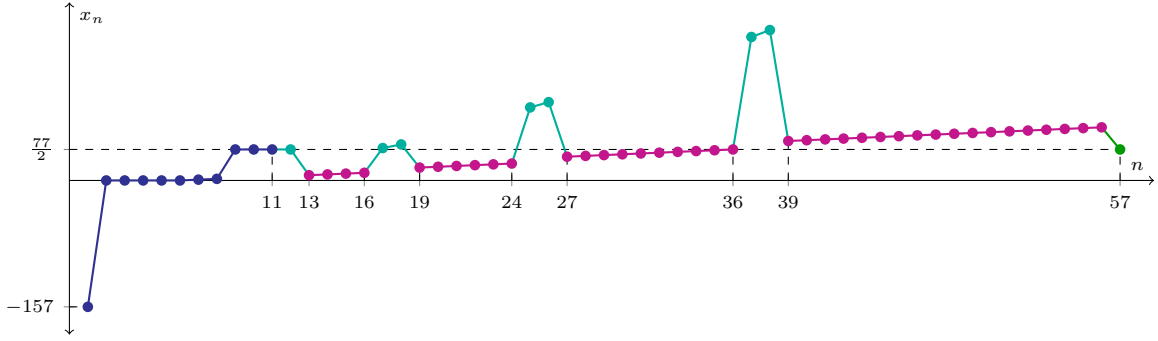
\begin{figure}
\centering
\begin{tikzpicture}
\begin{axis}[
	xmin=0,
	xmax=58.83870968,
	ymin=-191.3750000, 
	ymax=221.1250000,
    xtick={11,13,16,19,24,27,36,39,57},
    ytick={-157,38.5},
    yticklabels={$-157$,$\frac{77}{2}$},
	axis lines=middle,
    x axis line style=->,
    y axis line style=<->,
	samples=100,
	xlabel=$n$,
	ylabel=$x_n$,
	width=16cm,
    height=6cm
]

\draw[dashed] (axis cs:57,0) -- (axis cs:57,38.5);
\draw[dashed] (axis cs:0,38.5) -- (axis cs:57,38.5);

\draw[dashed] (axis cs:0,-157) -- (axis cs:1,-157);

\draw[dashed] (axis cs:11,0) -- (axis cs:11,38.5);
\draw[dashed] (axis cs:13,0) -- (axis cs:13,6.5);
\draw[dashed] (axis cs:16,0) -- (axis cs:16,9.5);
\draw[dashed] (axis cs:19,0) -- (axis cs:19,16);
\draw[dashed] (axis cs:24,0) -- (axis cs:24,21);
\draw[dashed] (axis cs:27,0) -- (axis cs:27,29.5);
\draw[dashed] (axis cs:36,0) -- (axis cs:36,38.5);
\draw[dashed] (axis cs:39,0) -- (axis cs:39,49);

\draw[thick,color=newblue] plot coordinates {(axis cs:1, -157) (axis cs:2, 0) (axis cs:3, 0) (axis cs:4, 0) (axis cs:5, 0) (axis cs:6, 0.) (axis cs:7, 1.) (axis cs:8, 2.) (axis cs:9, 38.50) (axis cs:10, 38.50) (axis cs:11, 38.50)};

\draw[thick,color=newlightblue] plot coordinates {(axis cs:11, 38.50) (axis cs:12, 38.50) (axis cs:13, 6.500)};

\draw[thick,color=newpurple] plot coordinates {(axis cs:13, 6.500) (axis cs:14, 7.500) (axis cs:15, 8.500) (axis cs:16, 9.500)};

\draw[thick,color=newlightblue] plot coordinates {(axis cs:16, 9.500) (axis cs:17, 40.25) (axis cs:18, 44.75) (axis cs:19, 16.)};

\draw[thick,color=newpurple] plot coordinates {(axis cs:19, 16.) (axis cs:20, 17.) (axis cs:21, 18.) (axis cs:22, 19.) (axis cs:23, 20.) (axis cs:24, 21.)};

\draw[thick,color=newlightblue] plot coordinates {(axis cs:24, 21.) (axis cs:25, 90.75) (axis cs:26, 97.25) (axis cs:27, 29.50)};

\draw[thick,color=newpurple] plot coordinates {(axis cs:27,29.50) (axis cs:28,30.50) (axis cs:29,31.50) (axis cs:30,32.50) (axis cs:31,33.50) (axis cs:32,34.50) (axis cs:33,35.50) (axis cs:34,36.50) (axis cs:35,37.50) (axis cs:36,38.50)};

\draw[thick,color=newlightblue] plot coordinates {(axis cs:36, 38.50) (axis cs:37, 178.2) (axis cs:38, 186.8) (axis cs:39, 49.)};

\draw[thick,color=newpurple] plot coordinates {(axis cs:39, 49.) (axis cs:40, 50.) (axis cs:41, 51.) (axis cs:42, 52.) (axis cs:43, 53.) (axis cs:44, 54.) (axis cs:45, 55.) (axis cs:46, 56.) (axis cs:47, 57.) (axis cs:48, 58.) (axis cs:49, 59.) (axis cs:50, 60.) (axis cs:51, 61.) (axis cs:52, 62.) (axis cs:53, 63.) (axis cs:54, 64.) (axis cs:55, 65.) (axis cs:56, 66.)};

\draw[thick,color=newgreen] plot coordinates {(axis cs:56, 66.) (axis cs:57, 38.50)};

\fill[newblue] (axis cs:1, -157) circle (2pt);
\fill[newblue] (axis cs:2, 0) circle (2pt);
\fill[newblue] (axis cs:3, 0) circle (2pt);
\fill[newblue] (axis cs:4, 0) circle (2pt);
\fill[newblue] (axis cs:5, 0) circle (2pt);
\fill[newblue] (axis cs:6, 0) circle (2pt);
\fill[newblue] (axis cs:7, 1) circle (2pt);
\fill[newblue] (axis cs:8, 2) circle (2pt);
\fill[newblue] (axis cs:9, 38.50) circle (2pt);
\fill[newblue] (axis cs:10, 38.50) circle (2pt);
\fill[newblue] (axis cs:11, 38.50) circle (2pt);

\fill[newlightblue] (axis cs:12, 38.50) circle (2pt);

\fill[newpurple] (axis cs:13, 6.500) circle (2pt);
\fill[newpurple] (axis cs:14, 7.500) circle (2pt);
\fill[newpurple] (axis cs:15, 8.500) circle (2pt);
\fill[newpurple] (axis cs:16, 9.500) circle (2pt);

\fill[newlightblue] (axis cs:17, 40.25) circle (2pt);
\fill[newlightblue] (axis cs:18, 44.75) circle (2pt);

\fill[newpurple] (axis cs:19, 16.) circle (2pt);
\fill[newpurple] (axis cs:20, 17.) circle (2pt);
\fill[newpurple] (axis cs:21, 18.) circle (2pt);
\fill[newpurple] (axis cs:22, 19.) circle (2pt);
\fill[newpurple] (axis cs:23, 20.) circle (2pt);
\fill[newpurple] (axis cs:24, 21.) circle (2pt);

\fill[newlightblue] (axis cs:25, 90.75) circle (2pt);
\fill[newlightblue] (axis cs:26, 97.25) circle (2pt);

\fill[newpurple] (axis cs:27,29.50) circle (2pt);
\fill[newpurple] (axis cs:28,30.50) circle (2pt);
\fill[newpurple] (axis cs:29,31.50) circle (2pt);
\fill[newpurple] (axis cs:30,32.50) circle (2pt);
\fill[newpurple] (axis cs:31,33.50) circle (2pt);
\fill[newpurple] (axis cs:32,34.50) circle (2pt);
\fill[newpurple] (axis cs:33,35.50) circle (2pt);
\fill[newpurple] (axis cs:34,36.50) circle (2pt);
\fill[newpurple] (axis cs:35,37.50) circle (2pt);
\fill[newpurple] (axis cs:36,38.50) circle (2pt);

\fill[newlightblue] (axis cs:37, 178.2) circle (2pt);
\fill[newlightblue] (axis cs:38, 186.8) circle (2pt);

\fill[newpurple] (axis cs:39, 49.) circle (2pt);
\fill[newpurple] (axis cs:40, 50.) circle (2pt);
\fill[newpurple] (axis cs:41, 51.) circle (2pt);
\fill[newpurple] (axis cs:42, 52.) circle (2pt);
\fill[newpurple] (axis cs:43, 53.) circle (2pt);
\fill[newpurple] (axis cs:44, 54.) circle (2pt);
\fill[newpurple] (axis cs:45, 55.) circle (2pt);
\fill[newpurple] (axis cs:46, 56.) circle (2pt);
\fill[newpurple] (axis cs:47, 57.) circle (2pt);
\fill[newpurple] (axis cs:48, 58.) circle (2pt);
\fill[newpurple] (axis cs:49, 59.) circle (2pt);
\fill[newpurple] (axis cs:50, 60.) circle (2pt);
\fill[newpurple] (axis cs:51, 61.) circle (2pt);
\fill[newpurple] (axis cs:52, 62.) circle (2pt);
\fill[newpurple] (axis cs:53, 63.) circle (2pt);
\fill[newpurple] (axis cs:54, 64.) circle (2pt);
\fill[newpurple] (axis cs:55, 65.) circle (2pt);
\fill[newpurple] (axis cs:56, 66.) circle (2pt);

\fill[newgreen] (axis cs:57, 38.50) circle (2pt);
\end{axis}
\end{tikzpicture}
\caption{\label{fig:AP}\small
The orbit of the input set \eqref{eq:examplethmB}. The input set is shown in dark blue, the generated pairs in purple, the intermediate iterates in light blue, and the iterate from which the orbit stabilises in green.}
\end{figure}

\section{A height-type function}

We have established our main result by considering 
sequences of initial 
sets of increasing size and expressing the growth of the transit time as 
a function of the size of the initial set. 
However, the available data for the original system $\left[0,\frac{p}{q},1\right]$ 
(figure \ref{fig:tauoriginal}) express the growth of the transit time 
as a function of the denominator $q$ of the initial condition. 
For the purpose of comparison, in the main theorem we need to replace 
the size of the initial set with a quantity which is comparable to $q$.
Accordingly, we introduce a height-type function $\mathcal{H}$ as follows.

First, we define the height of a singleton rational set $\left[\frac{p}{q}\right]$, where $q\neq 0$ and $\gcd(p,q)=1$, as
$$\mathcal{H}\left(\left[\frac{p}{q}\right]\right):=\begin{cases}
|p|&\text{if }q=1\\
|p|+|q|&\text{otherwise.}
\end{cases}$$
Then, the height of any rational set $\xi$ is defined to be
$$\mathcal{H}(\xi):=\min\biggl\{\sum_{x\in a\xi+b}\mathcal{H}([x]):a,b\in\mathbb{Q}, a\neq0\biggr\},$$
i.e., the minimum total height of the elements of a rational set which is affine-equivalent to $\xi$. With this definition, we have $\mathcal{H}\left(\left[0,\frac{p}{q},1\right]\right)=q$ for $0<p<q$, as easily verified. Hence for the original system we observe that the growths of the average and maximum transit times with the height of the initial set are algebraic with exponents $\alpha\approx 0.42$ and $\beta\approx 1.45$, respectively (figure \ref{fig:tauoriginal}).

We shall now use sets of the form \eqref{eq:setthmB} to construct a sequence of sets whose transit time grows algebraically with the height with exponent lying between $\alpha$ and $\beta$, 
thereby providing an efficient construction of initial sets with large transit times.

Given an odd integer $N\geqslant 2$, a set prescribed by the proof has the form \eqref{eq:setthmB}, where
$$n_0=\min\left\{n\geqslant 5\text{ odd}:n\geqslant N-3+\sqrt{2^{N+2}+5N^2-18N+21}\right\}\sim 2^{\frac{N}{2}+1},$$
by \eqref{eq:n0thmB}. Choosing
$$u_1=\cdots=u_{\frac{n_0-5}{2}}=m:=\frac{N-1}{2}n_0+2^N+N^2-3N+2\sim 2^N$$
and
$$\ell_2=\cdots=\ell_{\frac{n_0-1}{2}}=0$$
gives the set
$$\xi_{n_0}=\bigl[\ell_1,\underbrace{0,\ldots,0}_{\frac{n_0-3}{2}},0,1,2,\underbrace{m,\ldots,m}_{\frac{n_0-5}{2}}\bigr],$$
where
$$\ell_1=-m-3-\frac{n_0-5}{2}m\sim -2^{\frac{3}{2}N+1},$$
having transit time
\begin{equation}\label{eq:taulastsection}
\tau\left(\xi_{n_0}\right)\sim\frac{{n_0}^2}{2}\sim 2^{N+1}.
\end{equation}
Since both $N$ and $n_0$ are odd, then $m,\ell_1\in\mathbb{Z}$, so
\begin{eqnarray*}
\mathcal{H}\left(\xi_{n_0}\right)\leqslant\sum_{x\in\xi_{n_0}}\mathcal{H}([x])&=&\left|\ell_1\right|+\frac{n_0-1}{2}\cdot0+1+2+\frac{n_0-5}{2}\cdot m\\
&=&\left(m+3+\frac{n_0-5}{2}\cdot m\right)+3+\frac{n_0-5}{2}\cdot m\\
&=&\left(n_0-4\right)m+6\,\,\,=:\,\,\,\mathcal{H}^+\left(\xi_{n_0}\right).
\end{eqnarray*}
On the other hand,
$$\mathcal{H}\left(\xi_{n_0}\right)\geqslant|m|=:\mathcal{H}^-\left(\xi_{n_0}\right).$$
We have
$$\mathcal{H}^-\left(\xi_{n_0}\right)\sim 2^N\qquad\text{and}\qquad\mathcal{H}^+\left(\xi_{n_0}\right)\sim 2\cdot 2^{\frac{3}{2}N}.$$
Considering \eqref{eq:taulastsection}, we have
$$\tau\left(\xi_{n_0}\right)\sim 2\mathcal{H}^-\left(\xi_{n_0}\right)\qquad\text{and}\qquad\tau\left(\xi_{n_0}\right)\sim\sqrt[3]{2}\left[\mathcal{H}^+\left(\xi_{n_0}\right)\right]^{\frac{2}{3}}.$$
Thus, the growth of $\tau$ is exponential, the detailed behaviour depending on the value. We note that the exponent of both upper and lower bounds lies between the empirical exponents $\alpha$ and $\beta$ mentioned in the introduction.

As an illustration, suppose we want to construct an initial set with transit time at least $1000$. According to figure \ref{fig:tauoriginal} (a), if we use the original system, we expect the height of the set to be approximately $\left(\frac{1000}{e^{2.31}}\right)^{\frac{1}{0.42}}\approx 56786$. However, for $N=9$, we have from part ii) of our main theorem that the set
$$\xi_{55}=[-19703,\underbrace{0,\ldots,0}_{26},0,1,2,\underbrace{788,\ldots,788}_{25}]$$
has transit time $55+2^{9+1}+4\cdot 9-2=1113$. The height of this set is
\begin{eqnarray*}
\mathcal{H}\left(\xi_{55}\right)&=&\mathcal{H}\biggl(\biggl[-\frac{19703}{788},\underbrace{0,\ldots,0}_{26},0,\frac{1}{788},\frac{1}{394},\underbrace{1,\ldots,1}_{25}\biggr]\biggr)\\
&=&19703+788+1+788+1+394+25\cdot 1\\
&=&21700.
\end{eqnarray*}

%%%%%%%%%%%%%%%%%%%%%%%%%%%%%%%%%%%%%%%%%%%%%%%%%%%%%%%%%%%%%%%%%%%%%%%%%%%


\begin{thebibliography}{99}

\bibitem{CellarosiMunday}
 F.~Cellarosi and S.~Munday,
 On two conjectures for M\&m sequences,
 {\sl Journal of Difference Equations and Applications}
 {\bf 22} (3) (2016), 428--440.

\bibitem{ChamberlandMartelli}
 M.~Chamberland and M.~Martelli,
 The mean-median map,
 {\sl Journal of Difference Equations and Applications}
 {\bf 13} (7) (2007) 577--583.
 
\bibitem{Dlab}
 V.~Dlab,
 Arithmetic progressions of higher order,
 {\sl Teaching Mathematics and Computer Science}
 {\bf 9} (2) (2011) 225--239.

\bibitem{HoseanaMSc}
 J.~Hoseana,
 The mean-median map,
 MSc dissertation,
 Queen Mary University of London (2015).

\bibitem{HoseanaVivaldi}
 J.~Hoseana and F.~Vivaldi,
 Geometrical properties of the mean-median map,
 arXiv:1806.10184 (2018).

\bibitem{SchultzShiflett}
 H.~S.~Schultz and R.~C.~Shiflett,
 M\&m sequences,
 {\sl College Mathematics Journal}
 {\bf 36} (3) (2005) 191--198.

\end{thebibliography}
\end{document}